\theoremstyle{plain} 
\newtheorem{thm}{Theorem}[section]
\newtheorem{lem}[thm]{Lemma}
\newtheorem{cor}[thm]{Corollary}
\theoremstyle{definition}
\newtheorem{defn}[thm]{Definition}
\newtheorem{ex}[thm]{Example}
\newtheorem{quest}[thm]{Question}
\newtheorem{prob}[thm]{Problem}
\numberwithin{equation}{section}
\renewcommand{\theta}{\vartheta}
\renewcommand{\phi}{\varphi}
\renewcommand{\epsilon}{\varepsilon}
\renewcommand{\subset}{\subseteq}
\newcommand{\N}{\mathbb N}
\newcommand{\Z}{\mathbb Z}
\newcommand{\C}{\mathbb C}
\newcommand{\CC}{\mathcal C}
\newcommand{\twocol}{\circ\bullet}
\DeclareMathOperator{\rl}{rl}
\DeclareMathOperator{\rot}{rot}
\begin{document}
\title{Partition $C^*$-algebras}
\author{Moritz Weber}
\address{Saarland University, Fachbereich Mathematik, Postfach 151150,
66041 Saarbr\"ucken, Germany}
\email{weber@math.uni-sb.de}
\date{\today}
\subjclass[2010]{46LXX (Primary); 05A18, 20G42 (Secondary)}
\keywords{$C^*$-algebras, set partitions, relations, universal $C^*$-algebras, compact matrix quantum groups, quantum groups, easy quantum groups, Banica-Speicher quantum groups}
\thanks{The author was supported by the ERC Advanced Grant NCDFP, held by Roland Speicher, by the SFB-TRR 195, and by the DFG project \emph{Quantenautomorphismen von Graphen}.}

\begin{abstract}
We give a definition of partition $C^*$-algebras: To any  partition of a finite set, we assign algebraic relations for a matrix of generators of a universal $C^*$-algebra. We then prove how certain relations may be deduced from others and we explain a partition calculus for simplifying such computations. This article is a small note for $C^*$-algebraists having no background in compact quantum groups, although our partition $C^*$-algebras are motivated from those underlying  Banica-Speicher quantum groups (also called easy quantum groups). We list many open questions about partition $C^*$-algebras that may be tackled by purely $C^*$-algebraic means, ranging from ideal structures and representations on Hilbert spaces to $K$-theory and isomorphism questions.
In a follow up article, we deal with the quantum algebraic structure associated to partition $C^*$-algebras.
\end{abstract}

\maketitle

\section*{Introduction}

The theory of Banica-Speicher quantum groups (also called easy quantum groups) \cite{BS,WeEQGLN,WeLInd} is a quite combinatorial approach to Woronowicz's compact matrix quantum groups \cite{WoCMQG,Tim,NT}. The strategy is to build suitable tensor categories out of ensembles of set partitions, and then to view them as intertwiner spaces of some compact matrix quantum groups, due to Woronowicz's Tannaka-Krein theory \cite{WoTK}, a kind of quantum Schur-Weyl theory. However, these compact matrix quantum groups may simply be viewed as $C^*$-algebras endowed with some additional quantum algebraic structure -- and the link from set partitions to those $C^*$-algebras may be given directly, in a purely $C^*$-algebraic way.

\textbf{The purpose of this note is to extract the purely $C^*$-algebraic essence} of the above Banica-Speicher theor, to extend it slightly, and to formulate a class of $C^*$-algebras based on set partitions:
\begin{displaymath}
\xymatrix{
  \textnormal{partitions of finite sets} \ar[r]
 &\textnormal{$C^*$-algebraic relations} \ar[r]
 &\textnormal{partition $C^*$-algebras}
 }
\end{displaymath}

The general philosophy is, that much of the structure of a partition $C^*$-algebra is inherent in its combinatorial data. In this sense, partition $C^*$-algebras form a quite combinatorial class of $C^*$-algebras.

In order to get an impression of these relations coming from partitions, let $p$ be a partition of the ordered set $\{1,\ldots,k+l\}$ (i.e. a decomposition into a union of disjoint subsets) and let $n\in\N$. We consider the following relations $R(p)$ for self-adjoint elements $u_{ij}$, with $i,j=1,\ldots,n$:
\[\boxed{R(p):\qquad\sum_{\gamma_1,\ldots,\gamma_k=1}^n \delta_p(\gamma,\beta) u_{\gamma_1\alpha_1}\ldots u_{\gamma_k\alpha_k}
=\sum_{\gamma_1',\ldots,\gamma
_l'=1}^n \delta_p(\alpha,\gamma') u_{\beta_1\gamma_1'}\ldots u_{\beta_l\gamma_l'}\qquad}\]
Here, $\alpha=(\alpha_1,\ldots,\alpha_k)$ and $\beta=(\beta_1,\ldots,\beta_l)$ are multi indices over the set $\{1,\ldots,n\}$ and $\delta_p(\alpha,\beta)$ is either one or zero, depending on whether or not the indices $\alpha$ and $\beta$ match the partition structure (see Definition \ref{DefRP} for a precise definition). If $X$ is a collection of partitions,  a partition $C^*$-algebra is by definition the following unital universal $C^*$-algebra:
\[\boxed{A_n(X):=C^*(1,u_{ij}, i,j=1,\ldots,n\;|\; u_{ij}=u_{ij}^*, \textnormal{the relations $R(p)$ hold for all }p\in X)}\]
%Let $H(X)$ be the set of all partitions $p$ such that the relations $(R_p)$ hold in $A_n(X)$. Obviously $X$ is contained in $H(X)$, but a few direct algebraic computations show that $H(X)$ is much larger than $X$, in general. 
We explain a partition calculus suitable for deriving how relations $R(p)$ imply other relations $R(q)$. As an example, using this partition calculus, it is very easy to see that the relations 
\[\delta_{\beta_1\beta_4}\delta_{\beta_2\beta_3}\delta_{\beta_3\beta_5}\delta_{\beta_6\beta_8}
u_{\beta_2\alpha_1}u_{\beta_7\alpha_2}
=\sum_{\gamma'_1,\gamma'_2}u_{\beta_1\gamma'_1}u_{\beta_2\alpha_1}u_{\beta_3\alpha_1}u_{\beta_4\gamma'_1}u_{\beta_5\alpha_1}u_{\beta_6\gamma'_2}
u_{\beta_7\alpha_2}u_{\beta_8\gamma'_2}\]
and
\[\sum_\gamma u_{\gamma \alpha_1}u_{\beta_2 \alpha_2}u_{\gamma \alpha_3}u_{\beta_1 \alpha_4}u_{\beta_2 \alpha_5}=\delta_{\alpha_1\alpha_3}\delta_{\alpha_2\alpha_5}u_{\beta_1\alpha_4}u_{\beta_2\alpha_2}\]
imply that $A_n(X)$ is commutative. This partition calculus allows us to conclude that \textbf{partition $C^*$-algebras are of combinatorial nature}.\\

Although endowing  $A_n(X)$ with some Hopf algebraic structure yields quantum groups which are well-studied, many very basic questions about the $C^*$-algebraic properties of $A_n(X)$ still remain mysterious. Amongst others, the ideal structure of $A_n(X)$ is far from being understood; in particular almost nothing is known about the kernel of maps
\[A_n(X)\to A_n(Y)\]
with $X\subset Y$. Moreover, the $K$-theory of $A_n(X)$ is known only in very special cases. Finally, we do not have enough concrete representations of $A_n(X)$ as operators on a Hilbert space. Therefore, amongst others, quite a number of questions around isomorphisms of partition $C^*$-algebras remain open. It would be very fruitful for the theory of Banica-Speicher quantum groups and in general, for compact quantum groups, if some of the questions could be answered. 

In Section \ref{SectOpen}, we provide a list of open questions concerning partition $C^*$-algebras, formulated within the theory of $C^*$-algebras. Our partition $C^*$-algebras are \emph{not} $C^*$-algebraic completions of the partition algebras in \cite{PartAlg}.

\pagebreak

\section{The combinatorial data: set partitions}

\subsection{Partitions}

Let $k,l\in\N_0$ and consider the finite, ordered set $\{1,\ldots,k+l\}$. A decomposition of this set into disjoint, non-empty subsets $V_1,\ldots,V_m$ (called the \emph{blocks}) whose union is the complete set, is called a \emph{(set) partition}. We represent such partitions by pictures, drawing $k$ points on an upper line and $l$ points on a lower line with the convention of numbering the points counterclockwise, starting with the lower left point; we then connect those points belonging to the same block by strings (the strings are drawn inside a rectangle spanned by the upper and the lower line of points). As an example with $k=4$ and $l=5$, the partition
\[V_1=\{1,7,9\},\quad V_2=\{2,5\},\quad V_3=\{3\},\quad V_4=\{4\},\quad V_5=\{6,8\}\]
of the set $\{1,\ldots,9\}$ may be drawn as:
\newsavebox{\boxp}
   \savebox{\boxp}
   { \begin{picture}(4,5.5)
     \put(-1,6.35){\parti{5}{1}}
     \put(-1,6.35){\partii{1}{2}{4}}
     \put(0.3,3.5){\line(1,0){2}}
     \put(2.3,3.5){\line(0,1){0.5}}
     \put(2.3,4.8){\line(0,1){0.6}}
     \put(2.3,4.4){\oval(0.8,0.8)[r]}
     \put(0.05,5.3){$\circ$}
     \put(1.05,5.3){$\circ$}
     \put(2.05,5.3){$\circ$}
     \put(3.05,5.3){$\circ$}
     \put(-1,0.35){\uppartii{2}{2}{5}}
     \put(-1,0.35){\upparti{1}{3}}
     \put(-1,0.35){\upparti{1}{4}}     
     \put(0.05,0){$\circ$}
     \put(1.05,0){$\circ$}
     \put(2.05,0){$\circ$}
     \put(3.05,0){$\circ$}     
     \put(4.05,0){$\circ$}  
     \end{picture}}     
\begin{center}
\begin{picture}(10,7.5)
 \put(0,3.5){$p=$}
 \put(1.5,0.7){\usebox{\boxp}}
 \put(7,3.5){$\in P(4,5)$}
 \put(1.8,0){1}
 \put(2.8,0){2}
 \put(3.8,0){3}
 \put(4.8,0){4}
 \put(5.8,0){5}
 \put(1.8,6.5){9}
 \put(2.8,6.5){8}
 \put(3.8,6.5){7}
 \put(4.8,6.5){6}  
\end{picture}
\end{center}
We usually omit to write down the numberings of the points. The set of all partitions with $k$ upper and $l$ lower points is denoted by $P(k,l)$, and the collection of all $P(k,l)$ is denoted by $P$. Note that $k=0$ and $l=0$ are allowed.
Some basic examples of partitions are the following ones, each consisting of a single block apart from the crossing partition (which consists in two blocks):
\begin{align*}
&\idpart\in P(1,1) &&\paarpart\in P(0,2) \textnormal{ and }\baarpart\in P(2,0)\\
&\textnormal{identity partition} &&\textnormal{pair partitions}\\
&\crosspart\in P(2,2) &&\dreipartrot\in P(2,1)\textnormal{ and }\downdreipartrot\in P(1,2)\\
&\textnormal{crossing partition} &&\textnormal{three block partitions}
\end{align*}
See also Appendix B for more examples, and \cite{BS,VSW}.

\subsection{Operations on partitions}
\label{SectOper}

We have several operations on the sets $P$ and $P\times P$. Let $p\in P(k,l)$ and $q\in P(k',l')$ be given and let the partitions
\newsavebox{\boxq}
   \savebox{\boxq}
   { \begin{picture}(4,5.5)
     \put(-1,6.35){\partii{1}{1}{2}}
     \put(-1,6.35){\partii{1}{3}{4}}
     \put(-1,6.35){\parti{1}{5}}
     \put(1.8,1.3){\line(1,1){2.5}}
     \put(4.3,3.8){\line(0,1){0.6}}
     \put(0.05,5.3){$\circ$}
     \put(1.05,5.3){$\circ$}
     \put(2.05,5.3){$\circ$}
     \put(3.05,5.3){$\circ$}
     \put(4.05,5.3){$\circ$}
     \put(-1,0.35){\upparti{1}{1}}
     \put(-1,0.35){\uppartii{1}{2}{3}}
     \put(0.05,0){$\circ$}
     \put(1.05,0){$\circ$}
     \put(2.05,0){$\circ$}
     \end{picture}}     
\newsavebox{\boxqshort}
   \savebox{\boxqshort}
   { \begin{picture}(4,5.5)
     \put(-1,4.35){\partii{1}{1}{2}}
     \put(-1,4.35){\partii{1}{3}{4}}
     \put(-1,4.35){\parti{1}{5}}
     \put(1.8,1.3){\line(3,1){2.5}}
     \put(4.3,2.2){\line(0,1){0.2}}
     \put(0.05,3.3){$\circ$}
     \put(1.05,3.3){$\circ$}
     \put(2.05,3.3){$\circ$}
     \put(3.05,3.3){$\circ$}
     \put(4.05,3.3){$\circ$}
     \put(-1,0.35){\upparti{1}{1}}
     \put(-1,0.35){\uppartii{1}{2}{3}}
     \put(0.05,0){$\circ$}
     \put(1.05,0){$\circ$}
     \put(2.05,0){$\circ$}
     \end{picture}}     
\begin{center}
\begin{picture}(25,7.5)
 \put(0,3.5){$p=$}
 \put(1.5,0.7){\usebox{\boxp}}
 \put(7,3.5){$\in P(4,5)$}
 \put(15,3.5){$q=$}
 \put(16.5,0.7){\usebox{\boxq}}
 \put(22,3.5){$\in P(5,3)$}
\end{picture}
\end{center}
be our guiding sample partitions. The \emph{tensor product} of $p$ and $q$ is the partition $p\otimes q\in P(k+k',l+l')$ obtained by horizontal concatenation, i.e. writing $p$ and $q$ side by side:
\begin{center}
\begin{picture}(16,7.5)
 \put(0,3.5){$p\otimes q=$}
 \put(2.5,0.7){\usebox{\boxp}}
 \put(7.5,0.7){\usebox{\boxq}}
 \put(13,3.5){$\in P(9,8)$}
\end{picture}
\end{center}
If $l=k'$, then the \emph{composition} of $q$ and $p$ is the partition $qp\in P(k,l')$ obtained by vertical concatenation, i.e. writing $q$ below $p$. 
First connect $k$ upper points by $p$ to $l=k'$ middle points and then connect these points  by $q$ to $l'$ lower points. This yields a partition, connecting $k$ upper points with $l'$ lower points. The $l$ middle points are removed. By the composition procedure, certain composed strings (called \emph{loops}) may appear, which are neither connected to upper nor to lower points; they result from blocks around the middle points. We remove these loops and denote their number by $\rl(q,p)$.
\newsavebox{\boxqphere}
   \savebox{\boxqphere}
   { \begin{picture}(4,5.5)
     \put(-1,6.35){\partii{1}{2}{4}}
     \put(0.3,3.3){\line(0,1){2.2}}
     \put(0.3,3.3){\line(1,0){2}}
     \put(2.3,3.3){\line(0,1){0.5}}
     \put(2.3,4.6){\line(0,1){0.8}}
     \put(2.3,4.2){\oval(0.8,0.8)[r]}
     \put(1.8,1.3){\line(0,1){2}}     
     \put(0.05,5.3){$\circ$}
     \put(1.05,5.3){$\circ$}
     \put(2.05,5.3){$\circ$}
     \put(3.05,5.3){$\circ$}
     \put(-1,0.35){\upparti{1}{1}}
     \put(-1,0.35){\uppartii{1}{2}{3}}
     \put(0.05,0){$\circ$}
     \put(1.05,0){$\circ$}
     \put(2.05,0){$\circ$}
     \end{picture}} 
\begin{center}
\begin{picture}(23,10)
 \put(0,4.5){$qp=$}
 \put(2.5,4){\usebox{\boxp}}
 \put(2.5,0.7){\usebox{\boxqshort}}
 \put(8,4.5){$=$}
 \put(9,2){\usebox{\boxqphere}}
 \put(14,4.5){$\in P(4,3)\qquad \rl(q,p)=1$}
\end{picture}
\end{center}
The \emph{vertical reflection} of $p$ is the partition $\tilde p\in P(k,l)$ obtained by reflection at the vertical axis, whereas the \emph{involution} of $p$ is the partition $p^*\in P(l,k)$ obtained by reflection at the horizontal axis, i.e. turning $p$ upside down:
\newsavebox{\boxptildeh}
   \savebox{\boxptildeh}
   { \begin{picture}(4,5.5)
     \put(-1,6.35){\partii{1}{1}{3}}
     \put(-1,6.35){\parti{2}{4}}     
     \put(1.3,3.3){\line(1,0){2}}
     \put(1.3,3.3){\line(0,1){0.5}}
     \put(1.3,4.6){\line(0,1){0.8}}
     \put(1.3,4.2){\oval(0.8,0.8)[r]}
     \put(2.8,3.3){\line(3,-2){1.5}}
     \put(0.05,5.3){$\circ$}
     \put(1.05,5.3){$\circ$}
     \put(2.05,5.3){$\circ$}
     \put(3.05,5.3){$\circ$}
     \put(-1,0.35){\uppartii{2}{1}{4}}
     \put(-1,0.35){\upparti{1}{2}}
     \put(-1,0.35){\upparti{1}{3}}     
     \put(-1,0.35){\upparti{2}{5}}   
     \put(0.05,0){$\circ$}
     \put(1.05,0){$\circ$}
     \put(2.05,0){$\circ$}
     \put(3.05,0){$\circ$}     
     \put(4.05,0){$\circ$}  
     \end{picture}}     
\newsavebox{\boxpsternh}
   \savebox{\boxpsternh}
   { \begin{picture}(4,5.5)
     \put(-1,6.35){\parti{5}{1}}              
     \put(-1,6.35){\partii{2}{2}{5}}
     \put(-1,6.35){\parti{1}{3}}     
     \put(-1,6.35){\parti{1}{4}} 
     \put(0.3,2.5){\line(1,0){2}}
     \put(2.3,2.5){\line(0,-1){0.7}}
     \put(2.3,0.4){\line(0,1){0.6}}
     \put(2.3,1.4){\oval(0.8,0.8)[r]}
     \put(0.05,5.3){$\circ$}
     \put(1.05,5.3){$\circ$}
     \put(2.05,5.3){$\circ$}
     \put(3.05,5.3){$\circ$}
     \put(4.05,5.3){$\circ$}
     \put(-1,0.35){\uppartii{1}{2}{4}}
     \put(0.05,0){$\circ$}
     \put(1.05,0){$\circ$}
     \put(2.05,0){$\circ$}
     \put(3.05,0){$\circ$}     
     \end{picture}}          
\begin{center}
\begin{picture}(26,7.5)
 \put(0,3.5){$\tilde p=$}
 \put(1.5,0.7){\usebox{\boxptildeh}}
 \put(7,3.5){$\in P(4,5)$}
 \put(15,3.5){$p^*=$}
 \put(16.5,0.7){\usebox{\boxpsternh}}
 \put(22,3.5){$\in P(5,4)$}
\end{picture}
\end{center}
For more examples on the above operations, see Appendix B. See also \cite{BS} for a first definition of these operations, and see \cite{We,WeEQGLN,WeLInd} for more on it.

\subsection{Categories of partitions}
\label{SectCateg}

As a side remark, let us mention the following. A collection $\mathcal C$ of subsets $\mathcal C(k,l)\subseteq P(k,l)$ (for all $k,l\in\N_0$) is a \emph{category of partitions}, if it is closed under the above operations (tensor product, composition, involution and vertical reflection) and if it contains the pair partitions $\paarpart$ and $\baarpart$ as well as the identity partition $\idpart$. (Actually, for categories of partitions the vertical reflection may be deduced from the other operations, see \cite[Lem. 1.1]{TWcomb}.)
Categories of partitions play a major role in the theory of Banica-Speicher quantum groups \cite{BS}, see also Section \ref{SectCQG} for more on this subject.
Examples of categories of partitions include the set of all partitions $P$, the set of all \emph{pair partions} $P_2$ (each block of each partition consists of exactly two points), the set of all \emph{non-crossing partitions} $NC$ (each partition may be drawn in such a way that the strings do not cross), and the set of all \emph{non-crossing pair partitions} $NC_2$. See \cite{BS,We,WeEQGLN,WeLInd}.

\section{Partition $C^*$-algebras: The self-adjoint case}
\label{SectOnecolor}

We now define partition $C^*$-algebras in the case of self-adjoint generators $u_{ij}$. For more general cases, see Section \ref{SectGeneral}.

\subsection{The relations associated to partitions}

Let $p\in P(k,l)$ be a partition on $k$ upper and $l$ lower points, and let $n\in\N$. Let $i=(i_1,\ldots,i_k)$ and $j=(j_1,\ldots,j_l)$ be two multi indices with $i_s, j_t\in\{1,\ldots,n\}$ for all $1\leq s\leq k$ and all $1\leq t\leq l$. We also say that $i$ is a \emph{multi index of length $k$}, while $j$ is a multi index of length $l$.
We label the $k$ upper points of the partition $p$ with the indices $i_1,\ldots,i_k$ (from left to right) and respectively the $l$ lower points of $p$ with $j_1,\ldots,j_l$ (again from left to right).
We put:
\[\delta_p(i,j):=\begin{cases} 1 &\textnormal{if the strings of $p$ connect only equal indices}\\
0 &\textnormal{otherwise}\end{cases}\]
Note that if $k=0$, then $\delta_p(\emptyset,j)$ is still well-defined, likewise $\delta_p(i,\emptyset)$. As an example with $i=(2,4,2,4)$, $i'=(3,4,2,4)$ and $j=(2,5,5,7,5)$:
\begin{center}
\begin{picture}(24,7.5)
 \put(-1,3.5){$\delta_p(i,j)=1:$}
 \put(3.5,0.7){\usebox{\boxp}}
 \put(3.8,0){2}
 \put(4.8,0){5}
 \put(5.8,0){5}
 \put(6.8,0){7}
 \put(7.8,0){5}
 \put(3.8,6.5){2}
 \put(4.8,6.5){4}
 \put(5.8,6.5){2}
 \put(6.8,6.5){4} 
 \put(14,3.5){$\delta_p(i',j)=0:$}
 \put(18.5,0.7){\usebox{\boxp}}
 \put(18.8,0){\textbf 2}
 \put(19.8,0){5}
 \put(20.8,0){5}
 \put(21.8,0){7}
 \put(22.8,0){5}
 \put(18.8,6.5){\textbf 3}
 \put(19.8,6.5){4}
 \put(20.8,6.5){\textbf 2}
 \put(21.8,6.5){4}    
\end{picture}
\end{center}
\begin{defn}\label{DefRP}
Let $n\in\N$ and let $A$ be a unital $C^*$-algebra generated by $n^2$ elements $u_{ij}$, $1\leq i,j\leq n$.
Let $p\in P(k,l)$ be a partition. We say that the generators $u_{ij}$ \emph{fulfill the relations} $R(p)$, if
all elements $u_{ij}$ are self-adjoint, and for all $\alpha_1,\ldots, \alpha_k\in\{1,\ldots,n\}$ and for all $\beta_1,\ldots,\beta_l\in\{1,\ldots,n\}$, we have:
\[\boxed{\sum_{\gamma_1,\ldots,\gamma_k=1}^n \delta_p(\gamma,\beta) u_{\gamma_1\alpha_1}\ldots u_{\gamma_k\alpha_k}
=\sum_{\gamma_1',\ldots,\gamma_l'=1}^n \delta_p(\alpha,\gamma') u_{\beta_1\gamma_1'}\ldots u_{\beta_l\gamma_l'}}\]
For the cases $k=0$ or $l=0$, we use the convention:
\begin{align*}
\delta_p(\emptyset,\beta)1
&=\sum_{\gamma_1',\ldots,\gamma_l'=1}^n \delta_p(\emptyset,\gamma') u_{\beta_1\gamma_1'}\ldots u_{\beta_l\gamma_l'}
&\textnormal{if } k= 0, l\neq 0\\
\sum_{\gamma_1,\ldots,\gamma_k=1}^n \delta_p(\gamma,\emptyset) u_{\gamma_1\alpha_1}\ldots u_{\gamma_k\alpha_k}
&=\delta_p(\alpha,\emptyset)1
&\textnormal{if } k\neq 0, l= 0
\end{align*}

\end{defn}

\begin{ex}\label{ExRel}
For many partitions $p$, the relations $R(p)$ boil down to very simple relations. See also Appendix A for a list of more relations. Let $u:=(u_{ij})_{i,j=1,\ldots,n}$.
\begin{itemize}
\item[(a)] $R(\idpart)$: $u_{ij}=u_{ij}$ for all $i,j$ (i.e. no relation).
\item[(b)] $R(\paarpart)$: $\sum_m u_{im}u_{jm}=\delta_{ij}$ for all $i,j$, which means $uu^t=1$.
\item[(c)] $R(\baarpart)$: $\sum_m u_{mi}u_{mj}=\delta_{ij}$ for all $i,j$, which means $u^tu=1$.
\item[(d)] $R(\dreipartrot)$: $u_{mi}u_{mj}=\delta_{ij}u_{mi}$ for all $i,j,m$.
\item[(e)] $R(\downdreipartrot)$: $u_{im}u_{jm}=\delta_{ij}u_{im}$ for all $i,j,m$.
\item[(f)] $R(\crosspart)$: all $u_{ij}$ commute.
\end{itemize}
\end{ex}

\subsection{Definition of partition $C^*$-algebras}

\begin{defn}\label{DefCStar}
Let $X\subset P$ be a set of partitions. If the unital universal $C^*$-algebra
\[\boxed{A_n(X):=C^*(1, u_{ij}, i,j=1,\ldots,n\;|\; u_{ij}=u_{ij}^*, \textnormal{the relations $R(p)$ hold for all }p\in X)}\]
exists\footnote{i.e., the $C^*$-seminorm is bounded, in the construction of the universal $C^*$-algebra, see for instance \cite[Sect. 2.2]{WeLInd}.}
we say that $X$ is \emph{$n$-admissible}, and we call $A_n(X)$ the \emph{partition $C^*$-algebra associated to} $X$.
\end{defn}

\begin{lem}\label{LemAdm}
If $\paarpart\in X$ or $\baarpart\in X$, then $X$ is $n$-admissible for all $n\in\N$.
\end{lem}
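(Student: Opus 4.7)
The plan is to verify that the universal $C^*$-seminorm is bounded on each generator $u_{ij}$, which is the standard criterion for existence of the universal $C^*$-algebra. Once we have a uniform bound $\|u_{ij}\| \leq 1$ valid in every $*$-representation of the abstract $*$-algebra defined by the relations, the $C^*$-completion exists and $X$ is $n$-admissible.

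First I would specialize to the case $\paarpart \in X$. By Example \ref{ExRel}(b), the relation $R(\paarpart)$ reads
\[
\sum_{m=1}^n u_{im}u_{jm} = \delta_{ij}\cdot 1
\qquad \text{for all } 1 \leq i,j \leq n.
\]
Setting $i=j$ yields $\sum_{m=1}^n u_{im}^2 = 1$ for every $i$. Since each $u_{im}$ is required to be self-adjoint, $u_{im}^2$ is a positive element in any $*$-representation, and a finite sum of positive elements equalling $1$ forces each summand to satisfy $0 \leq u_{im}^2 \leq 1$. Therefore $\|u_{im}\|^2 = \|u_{im}^2\| \leq 1$, so $\|u_{im}\| \leq 1$ uniformly.

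The case $\baarpart \in X$ is symmetric: by Example \ref{ExRel}(c), $R(\baarpart)$ gives $\sum_m u_{mi}u_{mj} = \delta_{ij}$, hence $\sum_m u_{mi}^2 = 1$, and the same positivity argument yields $\|u_{mi}\| \leq 1$ for all $m,i$.

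In either case the generators are uniformly bounded in norm by $1$ in every $*$-representation of the abstract unital $*$-algebra generated by the $u_{ij}$ modulo the relations $\{R(p) \mid p \in X\}$ together with self-adjointness. Hence the $C^*$-seminorm on this $*$-algebra is finite on every generator, and by standard universal $C^*$-algebra constructions (as cited in the footnote of Definition \ref{DefCStar}) it extends to a finite $C^*$-seminorm on the whole $*$-algebra, yielding $A_n(X)$. There is no real obstacle here; the only subtlety is the purely formal one of checking that self-adjointness of the generators is genuinely available in Definition \ref{DefRP}, which it is by hypothesis, so that the positivity of $u_{im}^2$ may be invoked.
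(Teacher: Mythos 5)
Your argument is correct and is essentially the paper's own proof: setting $i=j$ in $R(\paarpart)$ (resp. $R(\baarpart)$) gives $\sum_m u_{im}^2=1$ (resp. $\sum_m u_{mi}^2=1$), and positivity of the remaining summands forces $u_{ij}^2\leq 1$, hence $\|u_{ij}\|\leq 1$ and boundedness of the universal $C^*$-seminorm. The only cosmetic difference is that you write out the $\baarpart$ case explicitly, which the paper leaves to symmetry.
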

\begin{proof}
Since $u_{ij}$ is self-adjoint, $u_{ij}^2$ is positive and so is $\sum_{m\neq j} u_{im}^2$. Thus, $u_{ij}^2\leq\sum_{m}u_{im}^2=1$, which proves $\|u_{ij}\|^2\leq 1$. Hence the seminorm of each generator $u_{ij}$  is bounded which shows that $A_n(X)$ exists, see also \cite[Sect. 2.2]{WeLInd}.
\end{proof}

\begin{ex}\label{ExCStar}
Here are two examples of partition $C^*$-algebras.
\begin{itemize}
\item[(a)] $A_n(\{\paarpart,\baarpart\})=C^*(u_{ij}\;|\; u_{ij}=u_{ij}^*, \sum_m u_{mi}u_{mj}=\sum_m u_{im}u_{jm}=\delta_{ij})$.
\item[(b)] $A_n(\{\paarpart,\baarpart,\dreipartrot,\downdreipartrot\})=C^*(u_{ij}\;|\; u_{ij}=u_{ij}^*=u_{ij}^2, \sum_m u_{mi}=\sum_m u_{im}=1)$. We expressed the relations of this $C^*$-algebra in a slightly different way, using that projections summing up to one are necessarily mutually orthogonal. 
\end{itemize}
The first $C^*$-algebra was defined in the context of Wang's free orthogonal quantum group \cite{WaOn} while the second one underlies his free symmetric quantum group \cite{WaSn}. See also Appendix A for more examples or \cite{We,RWfull,RWsemi}.
\end{ex}

\subsection{The relations and the category operations}
 
We will now study which relations $R(p)$ are implied by other relations $R(q)$. This will be the basis for a partition calculus on the relations $R(p)$.

\begin{defn}\label{DefH}
Let $X\subset P$ be an $n$-admissible set of partitions. We denote by $H_n(X)$ the set of all partitions $p\in P$ such that the associated relations $R(p)$ are fulfilled in $A_n(X)$.
\end{defn}

\begin{lem}\label{LemAHXGleichAX}
Let $X\subset P$ and $Y\subset P$ be two $n$-admissible sets.
\begin{itemize}
\item[(a)] We have $X\subset H_n(X)$.
\item[(b)] If $X\subset Y$, then $H_n(X)\subset H_n(Y)$.
\item[(c)] We have $A_n(X)=A_n(H_n(X))$, in the sense that there is an isomorphism between these two $C^*$-algebras mapping generators to generators.
\item[(d)] We have $H_n(X)=H(H_n(X))$.
\item[(e)] If $X\subset Y\subset H_n(X)$, then $H_n(X)=H_n(Y)$.
\end{itemize}
\end{lem}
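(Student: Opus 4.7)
The plan is to use universal-property arguments throughout: whenever $X \subset Y$ are both $n$-admissible, there is a canonical unital $*$-homomorphism $A_n(X) \to A_n(Y)$ sending $u_{ij} \mapsto u_{ij}$, because the generators of $A_n(Y)$ automatically satisfy every relation indexed by the (smaller) set $X$. Every subsequent claim is obtained by chasing such maps.

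Part (a) is immediate from the definitions: by construction the generators of $A_n(X)$ satisfy $R(p)$ for every $p \in X$, so $p \in H_n(X)$ by Definition \ref{DefH}. For part (b), I would take the canonical morphism $\pi\colon A_n(X) \to A_n(Y)$ just described and observe that, for any $p \in H_n(X)$, applying $\pi$ to the identity $R(p)$ valid in $A_n(X)$ produces the same identity among the $\pi(u_{ij})$, which are precisely the generators of $A_n(Y)$; hence $p \in H_n(Y)$.

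For part (c), the central claim, I would first verify that $H_n(X)$ is itself $n$-admissible: since $A_n(X)$ is a concrete unital $C^*$-algebra whose generators satisfy $R(p)$ for every $p \in H_n(X)$ by definition, the $C^*$-seminorm used in the construction of $A_n(H_n(X))$ is bounded (the image in $A_n(X)$ provides a uniform bound on each generator). Then I would produce two mutually inverse maps at the level of generators. The inclusion $X \subset H_n(X)$ together with the argument of part (b) yields a morphism $A_n(X) \to A_n(H_n(X))$, $u_{ij} \mapsto u_{ij}$; conversely, since the generators of $A_n(X)$ satisfy $R(p)$ for every $p \in H_n(X)$, the universal property of $A_n(H_n(X))$ gives a morphism $A_n(H_n(X)) \to A_n(X)$, $u_{ij} \mapsto u_{ij}$. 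Both compositions are the identity on the generating set and hence, again by universality, the identity on the entire algebra.

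Parts (d) and (e) are quick corollaries. For (d), applying (a) to the admissible set $H_n(X)$ gives $H_n(X) \subset H_n(H_n(X))$; conversely, if $p \in H_n(H_n(X))$, then $R(p)$ holds in $A_n(H_n(X))$, which equals $A_n(X)$ by (c), so $p \in H_n(X)$. For (e), the hypothesis $X \subset Y$ yields $H_n(X) \subset H_n(Y)$ by (b), and the hypothesis $Y \subset H_n(X)$ yields $H_n(Y) \subset H_n(H_n(X)) = H_n(X)$ by (b) and (d). The only genuine subtlety anywhere in the argument is the admissibility check needed at the start of (c); everything else is a direct, almost formal, manipulation of universal properties.
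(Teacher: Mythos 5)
Your argument follows the paper's proof essentially verbatim: (a) is definitional, (b) and (c) are obtained from the canonical generator-preserving morphisms between universal $C^*$-algebras induced by inclusions of relation sets, and (d), (e) are the same formal consequences of (a)--(c). The one place where you go beyond the paper is the explicit check in (c) that $H_n(X)$ is $n$-admissible (the paper leaves this implicit), and there your justification does not work as stated. The existence of one $C^*$-algebra, namely $A_n(X)$, whose generators satisfy $R(p)$ for all $p\in H_n(X)$ does \emph{not} bound the universal $C^*$-seminorm in the construction of $A_n(H_n(X))$: that seminorm is the supremum over \emph{all} families, in \emph{all} $C^*$-algebras, satisfying the relations, and a single bounded family only witnesses a lower estimate. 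If one bounded representation sufficed, every set of partitions would be $n$-admissible (e.g.\ $X=\emptyset$, where $u_{ij}=0$ in $\C\cdot 1$ gives a bounded family, yet self-adjoint scalars of arbitrarily large norm also satisfy the empty set of relations, so no universal $C^*$-algebra exists).

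The repair is immediate and uses only what you already have: since $X\subset H_n(X)$, any family $(u_{ij})$ in any $C^*$-algebra satisfying $R(p)$ for all $p\in H_n(X)$ in particular satisfies $R(p)$ for all $p\in X$, and the $n$-admissibility of $X$ already provides a bound on $\|u_{ij}\|$ uniform over all such families; hence the universal seminorm for $H_n(X)$ is bounded and $A_n(H_n(X))$ exists. With this one-line correction your proof is complete and coincides with the paper's argument.
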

\begin{proof}
(a) By definition of $A_n(X)$ and $H_n(X)$.

(b) Since $X\subset Y$, we have a canonical surjection from $A_n(X)$ to $A_n(Y)$ mapping the generators $u_{ij}$ of $A_n(X)$ to the generators $u_{ij}$ of $A_n(Y)$. Hence, all relations $R(p)$ holding in $A_n(X)$ also hold  in $A_n(Y)$.

(c) For $p\in H_n(X)$, the relation $R(p)$ is fulfilled in $A_n(X)$ by definition. Hence, we have a surjection from $A_n(H_n(X))$ to $A_n(X)$ mapping generators to generators. A surjection in the converse direction follows from $X\subset H_n(X)$ and hence $A_n(X)=A_n(H_n(X))$.

(d) Let $p\in H_n(H_n(X))$. Then $R(p)$ is fulfilled in $A_n(H_n(X))=A_n(X)$, hence $p\in H_n(X)$. For $H_n(X)\subset H_n(H_n(X))$, we use (a).

(e) Using (b) and (d), we have $H_n(X)\subset H_n(Y)\subset H_n(H_n(X))=H_n(X)$.
\end{proof}

The following theorem is the basis of a partition calculus for partition $C^*$-algebras, see Section \ref{SectPartCalc}.

\begin{thm}\label{mainthm}
Let $X\subset P$ be an $n$-admissible set. Let $p\in P(k,l)$ and $q\in P(k',l')$.
\begin{itemize}
\item[(a)] We have $\idpart\in H_n(X)$.
\item[(b)] If $p,q\in H_n(X)$, then also $p\otimes q\in H_n(X)$.
\item[(c)] If $p,q\in H_n(X)$ and $l=k'$, then also $qp\in H_n(X)$.
\item[(d)] If $p\in H_n(X)$, then also $\tilde p\in H_n(X)$.
\item[(e)] Let  $\paarpart,\baarpart\in H_n(X)$. If $p\in H_n(X)$, then also $p^*\in H_n(X)$.
\end{itemize}
\end{thm}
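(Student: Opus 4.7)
The plan is to verify each part by directly manipulating $R(p)$ and relating it to the combinatorial operations from Section \ref{SectOper}. Part (a) is immediate, since $R(\idpart)$ reduces to $u_{ij}=u_{ij}$ (Example \ref{ExRel}(a)). For (b), the key observation to exploit is the multiplicative factorisation $\delta_{p\otimes q}(\gamma,\beta)=\delta_p(\gamma^{(1)},\beta^{(1)})\,\delta_q(\gamma^{(2)},\beta^{(2)})$, where the superscripts split the multi-index according to which side of $p\otimes q$ a position belongs to. Plugging into the left-hand side of $R(p\otimes q)$, distributivity splits the double sum as a product of two independent inner sums, to which $R(p)$ and $R(q)$ apply separately; re-assembling delivers the right-hand side.

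For (c), I would use the standard partition composition identity
\[
\sum_{\eta_1,\ldots,\eta_l=1}^{n} \delta_p(\gamma,\eta)\,\delta_q(\eta,\beta) \;=\; n^{\mathrm{rl}(q,p)}\,\delta_{qp}(\gamma,\beta),
\]
in which each middle-block loop contributes a free factor of $n$. Dividing by the nonzero scalar $n^{\mathrm{rl}(q,p)}$ and substituting into the left-hand side of $R(qp)$, one applies $R(p)$ to convert the $\gamma$-sum into a sum over a new multi-index $\mu$, then $R(q)$ to convert the $\eta$-sum into a sum over $\nu$; running the composition identity backwards in $\mu$ collapses the double sum into the right-hand side of $R(qp)$. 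For (d), I would take the $*$-adjoint of $R(p)$: because each $u_{ij}$ is self-adjoint, the adjoint merely reverses the order of the products on both sides. Relabelling $\alpha,\beta,\gamma,\gamma'$ by their reversals and using the elementary identity $\delta_{\tilde p}(\gamma,\beta)=\delta_p(\gamma^R,\beta^R)$ then converts the reversed identity back into $R(\tilde p)$.

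Part (e) is the most delicate. The plan is to construct $p^*$ from $p$ by iterated \emph{rotations}, each of which is itself a tensor/composition built from $p$, $\idpart$, $\paarpart$ and $\baarpart$. Concretely, the left-rotation
\[
\mathcal R_L(p) \;=\; (\idpart \otimes p)\,(\paarpart \otimes \idpart^{\otimes (k-1)}) \;\in\; P(k-1,\,l+1)
\]
moves the leftmost upper point of $p$ into the leftmost lower position, and the dual right-rotation $\mathcal R_R(p)\in P(k+1,l-1)$, defined symmetrically using $\baarpart$, moves the rightmost lower point into the rightmost upper position. A direct diagrammatic check, readily verified on small examples such as $p=\crosspart$ or $p=\dreipartrot$, will show that
\[
p^* \;=\; \widetilde{\mathcal R_R^{\,l}\bigl(\mathcal R_L^{\,k}(p)\bigr)}.
\]
Since every step on the right is a tensor/composition of elements of $H_n(X)$ by (a)--(c) and the hypothesis $\paarpart,\baarpart\in H_n(X)$, and the outer vertical reflection preserves $H_n(X)$ by (d), we obtain $p^*\in H_n(X)$.

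The main obstacle will be the diagrammatic bookkeeping in (e): I have to confirm carefully that $k$ left-rotations followed by $l$ right-rotations followed by a vertical reflection really produce $p^*$ rather than some other partition of the correct shape $P(l,k)$. The other parts reduce to careful index juggling, with the only scalar subtlety being the non-zero loop factor $n^{\mathrm{rl}(q,p)}$ in (c) by which one is allowed to divide.
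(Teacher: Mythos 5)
Your parts (a)--(d) are essentially the paper's own argument (multiplicativity of $\delta$ under $\otimes$, the loop-counting identity $\sum_\zeta\delta_p(\alpha,\zeta)\delta_q(\zeta,\beta)=n^{\rl(q,p)}\delta_{qp}(\alpha,\beta)$, and taking adjoints using self-adjointness), so nothing to add there. Part (e), however, is a genuinely different route, and it works. The paper proves (e) by a direct algebraic computation: it first writes down $R(\tilde p)$ via (d), then sandwiches it between the orthogonality relations $\sum_m u_{im}u_{jm}=\delta_{ij}$ and $\sum_m u_{mi}u_{mj}=\delta_{ij}$ coming from $R(\paarpart)$ and $R(\baarpart)$, turning the identity into $R(p^*)$ by an explicit chain of summations. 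You instead reduce (e) entirely to (a)--(d) plus the hypothesis, by realizing $p^*$ through the category operations: your left rotation $(\idpart\otimes p)(\paarpart\otimes\idpart^{\otimes(k-1)})$ does move the leftmost upper point to the leftmost lower position under the paper's composition convention, the symmetric right rotation with $\baarpart$ moves the rightmost lower point up, and the bookkeeping you flag as the main obstacle does check out: $\mathcal R_R^{\,l}(\mathcal R_L^{\,k}(p))$ has upper row $d_l,\ldots,d_1$ and lower row $u_k,\ldots,u_1$, i.e.\ it equals $\widetilde{p^*}$, so one vertical reflection gives $p^*$ (there is no circularity, since (d) was proved independently of (e)). This is essentially the rotation calculus the paper itself records later in Appendix B (Lemma \ref{PropOper1}), here promoted to a proof of (e). What each approach buys: yours is more conceptual and avoids all index manipulation, making clear that closure under involution is a purely diagrammatic consequence of the cap and cup; the paper's computation is self-contained at the level of relations and transfers verbatim to the two-colored, non-self-adjoint setting of Section \ref{SectGeneral}, where one must track colors through the rotations if one follows your route.
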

\begin{proof}
(a) This follows directly from Example \ref{ExRel}.

(b) Let $\alpha$ be a multi index of length $k$, $\alpha'$ be of length $k'$, $\beta$ be of length $l$, and $\beta'$ be of length $l'$. For the concatenated multi indices $\alpha\alpha'$ of length $k+k'$ and $\beta\beta'$ of length $l+l'$, we have:
\[\delta_{p\otimes q}(\alpha\alpha',\beta\beta')=\delta_p(\alpha,\beta)\delta_q(\alpha',\beta')\]
 Thus, using $R(p)$ and $R(q)$, we infer that $R({p\otimes q})$ is fulfilled in $A_n(X)$:
\begin{align*}
\sum_{\gamma,\gamma'}&\delta_{p\otimes q}(\gamma\gamma',\beta\beta')u_{\gamma_1\alpha_1}\ldots u_{\gamma_k\alpha_k}u_{\gamma'_1\alpha'_1}\ldots u_{\gamma'_{k'}\alpha'_{k'}}\\
&=\left(\sum_{\gamma}\delta_{p}(\gamma,\beta)u_{\gamma_1\alpha_1}\ldots u_{\gamma_k\alpha_k}\right)\left(\sum_{\gamma'}\delta_{q}(\gamma',\beta')u_{\gamma'_1\alpha'_1}\ldots u_{\gamma'_{k'}\alpha'_{k'}}\right)\\
 &= \left(\sum_{\zeta} \delta_{p}(\alpha,\zeta)u_{\beta_1\zeta_1}\ldots u_{\beta_l\zeta_l}\right)\left(\sum_{\zeta'} \delta_{q}(\alpha',\zeta')u_{\beta'_1\zeta'_1}\ldots u_{\beta'_{l'}\zeta'_{l'}}\right)\\
 &= \sum_{\zeta,\zeta'} \delta_{p\otimes q}(\alpha\alpha',\zeta\zeta')u_{\beta_1\zeta_1}\ldots u_{\beta_l\zeta_l}u_{\beta'_1\zeta'_1}\ldots u_{\beta'_{l'}\zeta'_{l'}}
\end{align*}

(c) Let $\alpha$ be a multi index of length $k$, and $\beta$ be of length $l'$. Then $\delta_{qp}(\alpha, \beta)=1$ if and only if there exists a multi index $\zeta$ of length $l$ such that $\delta_p(\alpha,\zeta)=1$ and $\delta_q(\zeta,\beta)=1$. Indeed, $\delta_{qp}(\alpha, \beta)=1$ simply means that we may label the strings of $qp$ by numbers imposed by $\alpha$ and $\beta$; the intersection of these strings with the middle points then yields a multi index $\zeta$.
The number of all possible multi indices $\zeta$ is $n^{\rl(q,p)}$, where $\rl(q,p)$ denotes the number of loops appearing throughout the procedure of composing $q$ and $p$, see Section \ref{SectOper}. Therefore, we obtain the formula:
\[\delta_{qp}(\alpha,\beta)=n^{-\rl(q,p)}\sum_\zeta\delta_p(\alpha, \zeta)\delta_q(\zeta,\beta)\]
Using $R(p)$ and $R(q)$, we deduce that $R({qp})$  is fulfilled in $A_n(X)$:
\allowdisplaybreaks
\begin{align*}
\sum_\gamma\delta_{qp}(\gamma,\beta)&u_{\gamma_1 \alpha_1}\ldots u_{\gamma_k \alpha_k}\\
&=n^{-\rl(q,p)}\sum_\gamma\sum_\zeta\delta_p(\gamma, \zeta)\delta_q(\zeta,\beta)u_{\gamma_1 \alpha_1}\ldots u_{\gamma_k \alpha_k}\\
&=n^{-\rl(q,p)}\sum_\zeta\delta_q(\zeta,\beta)\left(\sum_\gamma\delta_p(\gamma, \zeta)u_{\gamma_1\alpha_1}\ldots u_{\gamma_k\alpha_k}\right)\\
&=n^{-\rl(q,p)}\sum_\zeta\delta_q(\zeta,\beta)\left(\sum_{\zeta'}\delta_p(\alpha,\zeta')u_{\zeta_1\zeta'_1}\ldots u_{\zeta_l\zeta'_l}\right)\\
&=n^{-\rl(q,p)}\sum_{\zeta'}\delta_p(\alpha,\zeta')\left(\sum_\zeta\delta_q(\zeta,\beta)u_{\zeta_1\zeta'_1}\ldots u_{\zeta_l\zeta'_l}\right)\\
&=n^{-\rl(q,p)}\sum_{\zeta'}\delta_p(\alpha,\zeta')\left(\sum_{\gamma'}\delta_q(\zeta',\gamma')u_{\beta_1\gamma'_1}\ldots u_{\beta_{l'}\gamma'_{l'}}\right)\\
&=\sum_{\gamma'}\left(n^{-\rl(q,p)}\sum_{\zeta'}\delta_p(\alpha,\zeta')\delta_q(\zeta',\gamma')\right)u_{\beta_1\gamma'_1}\ldots u_{\beta_{l'}\gamma'_{l'}}\\
&=\sum_{\gamma'}\delta_{qp}(\alpha,\gamma')u_{\beta_1\gamma'_1}\ldots u_{\beta_{l'}\gamma'_{l'}}
\end{align*}

(d) Apply the involution to the equations of $R(p)$ of Definition \ref{DefRP} and note that \[\delta_p(\alpha_1\alpha_2\ldots\alpha_k,\beta_1\beta_2\ldots\beta_l)=\delta_{\tilde p}(\alpha_k\ldots\alpha_2\alpha_1,\beta_l\ldots\beta_2\beta_1).\]

(e) Using (d), we derive from $R(p)$ for any $\gamma$ of length $l$ and any $\gamma'$ of length $k$:
\[R({\tilde p}):\quad\sum_\zeta\delta_{\tilde p}(\zeta_k\ldots\zeta_1,\gamma_l\ldots\gamma_1)u_{\zeta_k\gamma'_k}\ldots u_{\zeta_1\gamma'_1}=\sum_\eta\delta_{\tilde p}(\gamma'_k\ldots\gamma'_1,\eta_l\ldots\eta_1)u_{\gamma_l\eta_l}\ldots u_{\gamma_1\eta_1}\]
Moreover, from $\paarpart\in H_n(X)$ and Example \ref{ExRel}, we deduce for any $\gamma$ and $\beta$ of length $k$ the formula
\[\sum_{\zeta,\gamma'}\delta_{p^*}(\gamma,\zeta)u_{\beta_1\gamma'_1}\ldots u_{\beta_k\gamma'_k}u_{\zeta_k\gamma'_k}\ldots u_{\zeta_1\gamma'_1}=\sum_\zeta\delta_{p^*}(\gamma,\zeta)\delta_{\beta\zeta}=\delta_{p^*}(\gamma,\beta)\]
while $\baarpart\in H_n(X)$ and Example \ref{ExRel} yields:
\[\sum_\gamma u_{\gamma_l\eta_l}\ldots u_{\gamma_1\eta_1}u_{\gamma_1\alpha_1}\ldots u_{\gamma_l\alpha_l}=\delta_{\eta\alpha}\]
Finally, since
\[\delta_{p^*}(\alpha,\beta)=\delta_p(\beta,\alpha)=\delta_{\tilde p}(\beta_k\ldots\beta_1,\alpha_l\ldots\alpha_1)\]
the relations $R({p^*})$ hold in $A_n(X)$ for $\alpha$  of length $l$ and $\beta$  of length $k$:
\begin{align*}
\sum_\gamma&\delta_{p^*}(\gamma,\beta)u_{\gamma_1\alpha_1}\ldots u_{\gamma_l\alpha_l}\\
&=\sum_{\gamma,\zeta,\gamma'}\delta_{p^*}(\gamma,\zeta)u_{\beta_1\gamma'_1}\ldots u_{\beta_k\gamma'_k}u_{\zeta_k\gamma'_k}\ldots u_{\zeta_1\gamma'_1}u_{\gamma_1\alpha_1}\ldots u_{\gamma_l\alpha_l}\\
&=\sum_{\gamma,\gamma'}u_{\beta_1\gamma'_1}\ldots u_{\beta_k\gamma'_k}\left(\sum_\zeta \delta_{\tilde p}(\zeta_k\ldots\zeta_1,\gamma_l\ldots\gamma_1)u_{\zeta_k\gamma'_k}\ldots u_{\zeta_1\gamma'_1}\right)u_{\gamma_1\alpha_1}\ldots u_{\gamma_l\alpha_l}\\
&=\sum_{\gamma,\gamma'}u_{\beta_1\gamma'_1}\ldots u_{\beta_k\gamma'_k}\left(\sum_\eta\delta_{\tilde p}(\gamma'_k\ldots\gamma'_1,\eta_l\ldots\eta_1)u_{\gamma_l\eta_l}\ldots u_{\gamma_1\eta_1}\right)u_{\gamma_1\alpha_1}\ldots u_{\gamma_l\alpha_l}\\
&=\sum_{\eta,\gamma'}\delta_{p^*}(\eta,\gamma')u_{\beta_1\gamma'_1}\ldots u_{\beta_k\gamma'_k}\left(\sum_\gamma u_{\gamma_l\eta_l}\ldots u_{\gamma_1\eta_1}u_{\gamma_1\alpha_1}\ldots u_{\gamma_l\alpha_l}\right)\\
&=\sum_{\eta,\gamma'}\delta_{p^*}(\eta,\gamma')u_{\beta_1\gamma'_1}\ldots u_{\beta_k\gamma'_k}\delta_{\eta\alpha}\\
&=\sum_{\gamma'}\delta_{p^*}(\alpha,\gamma')u_{\beta_1\gamma'_1}\ldots u_{\beta_k\gamma'_k}
\end{align*}
\end{proof}

The following corollary is of interest in the theory of Banica-Speicher quantum groups.

\begin{cor}\label{CorBSQG}
If $X\subset P$ is $n$-admissible and $\paarpart,\baarpart\in H_n(X)$, then $H_n(X)$ is a category of partitions in the sense of Section \ref{SectCateg}. Moreover, if $\mathcal C=\langle p_1,\ldots,p_m\rangle$ is a category of partitions generated by partitions $p_1,\ldots,p_m$ (i.e. $\mathcal C$ is the smallest category of partitions containing $p_1,\ldots,p_m$), then:
\[A_n(\mathcal C)=A_n(\{p_1,\ldots,p_m,\paarpart,\baarpart\})\]
\end{cor}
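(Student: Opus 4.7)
The plan is to derive both statements directly from Theorem \ref{mainthm} and Lemma \ref{LemAHXGleichAX}, since the corollary is essentially a packaging of those results.

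For the first assertion, I would simply verify the axioms of a category of partitions (from Section \ref{SectCateg}) one by one for $H_n(X)$. The identity partition $\idpart$ is in $H_n(X)$ by Theorem \ref{mainthm}(a), and $\paarpart, \baarpart$ are in $H_n(X)$ by hypothesis. Closure under tensor product, composition, and vertical reflection is given by Theorem \ref{mainthm}(b), (c), (d), and closure under involution is given by Theorem \ref{mainthm}(e), which applies precisely because $\paarpart, \baarpart \in H_n(X)$.

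For the second assertion, I would set $Y := \{p_1,\ldots,p_m,\paarpart,\baarpart\}$ and observe that $Y$ is $n$-admissible by Lemma \ref{LemAdm} (since $\paarpart \in Y$). By Lemma \ref{LemAHXGleichAX}(a), $Y \subset H_n(Y)$, so in particular $\paarpart, \baarpart \in H_n(Y)$. Applying the first part of the corollary (just proved) to $Y$, the set $H_n(Y)$ is a category of partitions, and it contains $p_1,\ldots,p_m$. Since $\mathcal{C} = \langle p_1,\ldots,p_m\rangle$ is by definition the smallest category of partitions containing these generators, we obtain $\mathcal{C} \subset H_n(Y)$. Combined with $Y \subset \mathcal{C}$ (which holds since $\paarpart, \baarpart$ and the $p_i$ all lie in $\mathcal{C}$), we get the sandwich $Y \subset \mathcal{C} \subset H_n(Y)$.

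From here, Lemma \ref{LemAHXGleichAX}(e) gives $H_n(\mathcal{C}) = H_n(Y)$, and then Lemma \ref{LemAHXGleichAX}(c), applied to both $Y$ and $\mathcal{C}$, yields
\[
A_n(Y) = A_n(H_n(Y)) = A_n(H_n(\mathcal{C})) = A_n(\mathcal{C}),
\]
which is the claimed identity. There is no real obstacle here; the whole argument is essentially bookkeeping about $H_n$, and the only thing to double-check is that the hypotheses of Theorem \ref{mainthm}(e) and of Lemma \ref{LemAHXGleichAX}(e) are met at the point where they are invoked, which is the reason the pair partitions must be explicitly adjoined to the generating set on the right-hand side.
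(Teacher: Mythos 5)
Your proposal is correct and follows essentially the same route as the paper: the first assertion is exactly the combination of Theorem \ref{mainthm}(a)--(e) with the hypothesis that the two pair partitions lie in $H_n(X)$, and the second assertion is the sandwich $Y\subset\mathcal C\subset H_n(Y)$ for $Y=\{p_1,\ldots,p_m,\paarpart,\baarpart\}$ combined with Lemma \ref{LemAHXGleichAX}. Your write-up merely makes explicit which parts of that lemma (admissibility via Lemma \ref{LemAdm}, then (a), (e), (c)) are being used, which the paper leaves implicit.
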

\begin{proof}
By the above theorem, $H_n(X)$ is a category of partitions for \linebreak $X:=\{p_1,\ldots,p_m,\paarpart,\baarpart\}$ and we have $X\subset\mathcal C\subset H_n(X)$.
By Lemma \ref{LemAHXGleichAX} we infer $A_n(\mathcal C)=A_n(X)$.
\end{proof}

\subsection{Partition calculus}
\label{SectPartCalc}

As a consequence of Theorem \ref{mainthm}, we obtain a partition calculus for the relations in partition $C^*$-algebras: If a partition $p\in P$ may be constructed from partitions $p_1,\ldots,p_m\in H_n(X)$ using the operations as in the above theorem, then $R(p)$ holds in $A_n(X)$. Thus, given a (typically quite small) set $X\subset P$, many more relations $R(q)$ hold in $A_n(X)$.  This turns the theory of partition $C^*$-algebras into a quite combinatorial one, see also Section \ref{SectCQG}.
In the situation when $u$ is orthogonal (i.e. when $\paarpart,\baarpart\in H_n(X)$), it is the structure of categories of partitions that underlies the relations in $A_n(X)$. Note that the partition calculus has some links to partition algebras and Temperly-Lieb algebras \cite{PartAlg}.

\begin{ex}
Coming back to the example in the introduction, we can derive the commutativity of $A_n(X)$ from the relations
\[\delta_{\beta_1\beta_4}\delta_{\beta_2\beta_3}\delta_{\beta_3\beta_5}\delta_{\beta_6\beta_8}
u_{\beta_2\alpha_1}u_{\beta_7\alpha_2}
=\sum_{\gamma'_1,\gamma'_2}u_{\beta_1\gamma'_1}u_{\beta_2\alpha_1}u_{\beta_3\alpha_1}u_{\beta_4\gamma'_1}u_{\beta_5\alpha_1}u_{\beta_6\gamma'_2}
u_{\beta_7\alpha_2}u_{\beta_8\gamma'_2}\]
and
\[\sum_\gamma u_{\gamma \alpha_1}u_{\beta_2 \alpha_2}u_{\gamma \alpha_3}u_{\beta_1 \alpha_4}u_{\beta_2 \alpha_5}=\delta_{\alpha_1\alpha_3}\delta_{\alpha_2\alpha_5}u_{\beta_1\alpha_4}u_{\beta_2\alpha_2}.\]
Indeed, these two relations are $R(p)$ and $R(q)$ with
\newsavebox{\boxbogen}
   \savebox{\boxbogen}
   { \begin{picture}(4,5.5)
     \put(0,0){\line(0,1){0.5}}
     \put(0,0.9){\oval(0.8,0.8)[r]}
     \put(0,1.3){\line(0,1){0.7}}
     \end{picture}}     
\newsavebox{\boxbogenlang}
   \savebox{\boxbogenlang}
   { \begin{picture}(4,5.5)
     \put(0,0){\line(0,1){0.5}}
     \put(0,0.9){\oval(0.8,0.8)[r]}
     \put(0,1.3){\line(0,1){1}}
     \end{picture}}     
\newsavebox{\boxpm}
   \savebox{\boxpm}
   { \begin{picture}(4,5.5)
     \put(0.05,3.9){$\circ$}
     \put(1.05,3.9){$\circ$}
     \put(-1,0.35){\uppartii{1}{1}{4}}
     \put(-1,0.35){\uppartii{1}{6}{8}}
     \put(-1,0.35){\upparti{2}{5}}     
     \put(1,0.35){\usebox{\boxbogen}}
     \put(2,0.35){\usebox{\boxbogen}}
     \put(6,0.35){\usebox{\boxbogen}}    
     \put(1.3,2.35){\line(1,0){3}}
     \put(0.3,4){\line(5,-3){2.7}}      
     \put(1.3,4){\line(3,-1){5}}
     \put(0.05,0){$\circ$}
     \put(1.05,0){$\circ$}
     \put(2.05,0){$\circ$}
     \put(3.05,0){$\circ$}     
     \put(4.05,0){$\circ$}  
     \put(5.05,0){$\circ$}
     \put(6.05,0){$\circ$}
     \put(7.05,0){$\circ$}
     \end{picture}}     
\newsavebox{\boxqm}
   \savebox{\boxqm}
   { \begin{picture}(4,5.5)
     \put(0.05,5.3){$\circ$}
     \put(1.05,5.3){$\circ$}
     \put(2.05,5.3){$\circ$}
     \put(3.05,5.3){$\circ$}
     \put(4.05,5.3){$\circ$}
     \put(-1,0.35){\upparti{1}{1}}
     \put(-1,0.35){\upparti{1}{2}}
     \put(1,1.35){\usebox{\boxbogen}}
     \put(1,3.35){\usebox{\boxbogen}}
     \put(1.3,1.35){\line(1,0){3}}
     \put(0.3,1.35){\line(1,1){3}}      
     \put(-1,6.35){\partii{1}{1}{3}}
     \put(-1,6.35){\parti{1}{4}} 
     \put(-1,6.35){\parti{4}{5}}     
     \put(0.05,0){$\circ$}
     \put(1.05,0){$\circ$}
     \end{picture}}     
\newsavebox{\boxqmstretch}
   \savebox{\boxqmstretch}
   { \begin{picture}(4,5.5)
     \put(0.05,5.3){$\circ$}
     \put(1.05,5.3){$\circ$}
     \put(5.05,5.3){$\circ$}
     \put(6.05,5.3){$\circ$}
     \put(7.05,5.3){$\circ$}
     \put(-1,0.35){\upparti{1}{1}}
     \put(-1,0.35){\upparti{1}{2}}
     \put(1,1.05){\usebox{\boxbogenlang}}
     \put(1,3.35){\usebox{\boxbogen}}
     \put(1.3,1.35){\line(1,0){6}}
     \put(0.3,1.35){\line(2,1){6}}      
     \put(-1,6.35){\partii{1}{1}{6}}
     \put(-1,6.35){\parti{1}{7}} 
     \put(-1,6.35){\parti{4}{8}}     
     \put(0.05,0){$\circ$}
     \put(1.05,0){$\circ$}
     \end{picture}}     
\begin{center}
\begin{picture}(25,7.5)
 \put(0,2.5){$p=$}
 \put(1.5,0.7){\usebox{\boxpm}}
 \put(15,2.5){$q=$}
 \put(16.5,0.7){\usebox{\boxqm}}
\end{picture}
\end{center}
The following partition calculus proves that the crossing partition $\crosspart$ is in $H_n(X)$ using Theorem \ref{mainthm}. Hence, by Example \ref{ExRel}(f), $A_n(X)$ is commutative:
\begin{center}
\begin{picture}(26,17)
 \put(12.5,11.3){\usebox{\boxpm}}
 \put(12.5,6){\usebox{\boxqm}}
 \put(18.05,6.35){\line(0,1){5}}
 \put(19.05,6.35){\line(0,1){5}}
 \put(20.05,6.35){\line(0,1){5}} 
 \put(12.5,0.7){\usebox{\boxqmstretch}}
 \put(23,8){$= \crosspart$}
 \put(8,13){$p$}
 \put(8,8){$q\otimes\idpart^{\otimes 3}$}
 \put(8,3){$q$} 
 \put(0,8){$q\left(q\otimes\idpart^{\otimes 3}\right)p=$}
\end{picture}
\end{center}
\end{ex}

\subsection{Remarks on the links with compact quantum groups}
\label{SectCQG}

Let us very briefly sketch how  the definition of partition $C^*$-algebras goes back to the theory of \emph{Banica-Speicher quantum groups} (also called \emph{easy quantum groups}) \cite{BS,WeEQGLN,WeLInd}. A \emph{compact matrix quantum group} \cite{WoCMQG, WoCMQG2,Tim,NT} is given by a unital $C^*$-algebra $A$ generated by elements $u_{ij}$, $i,j=1,\ldots, n$ (for some $n\in\N$) such that the matrices $u=(u_{ij})$ and $\bar u=(u_{ij}^*)$ are invertible and the map
\[\Delta:A\to A\otimes_{\min} A,\qquad u_{ij}\mapsto \sum_{k=1}^n u_{ik}\otimes u_{kj}\]
is a $^*$-homomorphism. Compact matrix quantum groups form a subclass of all \emph{compact quantum groups}, as defined by Woronowicz \cite{WoCQG}. By Woronowicz's Tannaka-Krein duality theorem \cite{WoTK}, every compact matrix quantum group is abstractly equivalent to a certain  tensor category. Now, Banica and Speicher found a way to construct such tensor categories out of categories of partitions \cite{BS, TWApp}. By the duality theorem we then obtain a compact matrix quantum group, namely a Banica-Speicher quantum group. Given a Banica-Speicher quantum group, its underlying $C^*$-algebra is a partition $C^*$-algebra $A_n(X)$ with $X$ a category of partitions in the sense of Section \ref{SectCateg}, see \cite{We,WeEQGLN,WeLInd,TWop}. In this context, Corollary \ref{CorBSQG} is helpful. However, in \cite{PC2} we will also discuss  examples of partition $C^*$-algebras which do not fit into the Banica-Speicher framework.

\subsection{Reduced versions of partition $C^*$-algebras}
\label{SectRed}

If a partition $C^*$-algebra may be equiped with a quantum group structure (see Section \ref{SectCQG}), it  possesses a natural state, the Haar state, coming from Haar integration in the theory of compact quantum groups. The GNS construction with respect to this state yields a ``reduced version'' of a partition $C^*$-algebra. For instance, in the case of Example \ref{ExCStar} we  obtain non-nuclear, exact, simple $C^*$-algebras \cite{POn1,POn2,PSn}. Sometimes, it is desirable to study this reduced version rather than the universal version. One of the reasons is that these reduced versions admit nice envelopping von Neumann algebras. In Example \ref{ExCStar}(a), we obtain a strongly solid, non-injective, full, prime II$_1$ factor having the Haagerup approximation property and having no Cartan subalgebra, see \cite[Sect. 9]{WeEQGLN} for more on the von Neumann algebraic side of partition $C^*$-algebras.

\section{Questions for $C^*$-algebraists}
\label{SectOpen}

The class of partition $C^*$-algebras provides many open questions within the theory of $C^*$-algebras. They should be tractable by combinatorial means as motivated in Section \ref{SectPartCalc}.

\begin{quest}
Which sets $X\subset P$ are $n$-admissible, i.e. when do the relations $R(p)$, $p\in X$ imply that the elements $u_{ij}\in A$ are bounded independently of the $C^*$-algebra $A$? In other words: When does the universal $C^*$-algebra  $A_n(X)$ exist?
\end{quest}

This is a very basic question and apart from the case $\paarpart,\baarpart\in H_n(X)$ implying admissibility, we have no result for the moment. Finding $n$-admissible sets $X$ such that $\paarpart,\baarpart\notin H_n(X)$ would be a first step in the direction of a new kind of quantum groups, see also \cite[Sect. 7]{PC2}.

\begin{quest}\label{QuestIdeal}
What is the ideal structure of partition $C^*$-algebras? In particular, what is the kernel of the map 
\[A_n(\{\paarpart,\baarpart\})\to A_n(\{\paarpart,\baarpart,\dreipartrot,\downdreipartrot\})\]
of the $C^*$-algebras of Example \ref{ExCStar}?
\end{quest}

We know nothing about the kernel of the above map.

\begin{quest}
What is the $K$-theory of partition $C^*$-algebras? Can it be described in combinatorial terms?
\end{quest}

The $K$-groups of the $C^*$-algebras of Examples \ref{ExCStar} have been computed by Voigt \cite{Vo1,Vo2}, using  $KK$-theory and quite some quantum group machinery. See Appendix A or \cite[Thm. 9.3]{WeEQGLN} for the results. There is no direct computation available.

\begin{quest}
What are representations of partition $C^*$-algebras as operators on Hilbert spaces?
\end{quest}

We know a few representations of the $C^*$-algebras of Example \ref{ExCStar}, see \cite{Model1,Model2,Model3}. However, the pool of known representations is quite small and insufficient for many questions, in particular for distinguishing partition $C^*$-algebras up to $C^*$-isomorphisms. We are lacking of a concrete representation theory of partition $C^*$-algebras and any progress is welcome. 
Here are two concrete problems which are of particular interest for our second article building on the present one, see \cite[Sect. 7]{PC2} for more on this.

\begin{prob}
Prove or disprove that the matrix $u=(u_{ij})\in M_n(A_n(\{p,p^*\}))$ is orthogonal, for $p=\positioner$ and all $n\in\N$.
For the latter, find a number $n\in\N$, a Hilbert space $H$ and selfadjoint operators $u_{ij}\in B(H)$, $1\leq i,j\leq n$ such that:
\begin{align*}
&\textnormal{(i)} &&\sum_{k_1,k_2,k_3=1}^n u_{ak_1}u_{ik_2}u_{bk_3}u_{jk_2}
=\sum_{k_1,k_2,k_3=1}^n u_{k_1a}u_{k_2i}u_{k_3b}u_{k_2j}=\delta_{ij}\quad\textnormal{for all } a,b,i,j\\
&\textnormal{(ii)} &&\textnormal{there are $i$ and $j$ such that }\sum_{k=1}^n u_{ik}u_{jk}\neq \delta_{ij}\textnormal{ or }\sum_{k=1}^n u_{ki}u_{kj}\neq \delta_{ij}
\end{align*}
\end{prob}

\begin{prob}
Prove or disprove that the matrix $u=(u_{ij})\in M_n(A_n(\{p,p^*\}))$ is orthogonal, for $p=\crosspartoneline$ and all $n\in\N$. For the latter, it is sufficient to prove that $A_n(\{p,p^*\})$ is noncommutative, since $\paarpart,\baarpart\in H_n(\{p,p^*\})$ would imply $\crosspart\in H_n(\{p,p^*\})$ by partition calculus, and hence commutativity of $A_n(\{p,p^*\})$ by Example \ref{ExRel}(f).
Thus, find a number $n\in\N$, a Hilbert space $H$ and selfadjoint operators $u_{ij}\in B(H)$, $1\leq i,j\leq n$ such that:
\begin{align*}
&\textnormal{(i)} &&\sum_{k_1,k_2=1}^n u_{i_1k_1}u_{i_2k_2}u_{j_1k_1}u_{j_2k_2}
=\sum_{k_1,k_2=1}^n u_{k_1i_1}u_{k_2i_2}u_{k_1j_1}u_{k_2j_2}=\delta_{i_1j_1}\delta_{i_2j_2}\;\textnormal{for all } i_1,i_2,j_1,j_2\\
&\textnormal{(ii)} &&\textnormal{there are $i,j,k,l$ such that } u_{ij}u_{kl}\neq u_{kl}u_{ij}
\end{align*}
\end{prob}

Finding partition $C^*$-algebras with non-orthogonal matrices $u$ (but with a compact matrix quantum structure) would yield interesting new examples of combinatorial compact matrix quantum groups going beyond Banica and Speicher's framework, see \cite[Sect. 7]{PC2} .

\begin{quest}
Which partition $C^*$-algebras are isomorphic?
\end{quest}

A few examples of isomorphisms are known, see for instance \cite{Ra,We,RWsemi}. For the vast majority of partition  $C^*$-algebras, we know very little. This is particularly annoying when new constructions in the theory of compact quantum groups yield new examples of quantum groups -- but we cannot determine whether or not they are isomorphic to known examples of quantum groups associated to partition $C^*$-algebras. Note however, that the notion of isomorphism of compact quantum groups is stronger than an isomorphism of $C^*$-algebras.

\begin{quest}
How do partition $C^*$-algebras $A_n(X)$ depend on $n\in\N$?
\end{quest}

It is known that the $C^*$-algebra of Example \ref{ExCStar}(b) is commutative for $n=2$ and $n=3$, and non-commutative for $n\geq 4$. Some further properties of partitions $C^*$-algebras behave differently for small $n\in\N$. Moreover, by Voigt's results on $K$-theory, we know that the $C^*$-algebras of Example \ref{ExCStar} are all non-isomorphic for different $n\in\N$.

%\begin{quest}
%Let $X\subset P$ and let $\mathcal C=\langle X \rangle$ be the category of partitions generated by $X$. Let $\paarpart, \baarpart\in H_n(X)$. We certainly have $\mathcal C\subset H_n(X)$, but do we have equality? 
%\end{quest}

%By Theorem \ref{mainthm}, we know that the relations $R(p)$, $p\in X$ imply many other relations in $A_n(X)$; in fact, $H_n(X)$ is closed under the category operations (tensor product etc, see Section XXX). However, we do not know whether even more relations $R(p)$, $p\in P\backslash\langle X\rangle$ hold in $A_n(X)$.

\section{Partition $C^*$-algebras: More general cases}
\label{SectGeneral}

We will now briefly sketch several more general definitions of partition $C^*$-algebras.

\subsection{The non-selfadjoint case}
\label{SectTwoCol}

The first natural generalization is the one to non-selfadjoint generators. This is also the basis of unitary Banica-Speicher quantum groups, as introduced in \cite{TWop}.

\subsubsection{The combinatorial data: two-colored set partitions}

We consider partitions  $p\in P^{\twocol}(k,l)$ on $k$ upper and $l$ lower points, where each point is either white ($\circ$) or black ($\bullet$). Like in the non-colored (or one-colored) case, we can form tensor products and compositions of partitions. Note, that the composition $qp$ of two partitions $p\in P^{\twocol}(k,l)$ and $q\in P^{\twocol}(k',l')$ is only defined if $k'=l$ and if each of the $l$ lower points of $p$ has exactly the same color as the corresponding upper point of $q$. In other words: We cannot compose black points with white points and the converse -- only white to white and black to black.

Furthermore, if $p\in P^{\twocol}(k,l)$ then its \emph{verticolor reflection} $\tilde p\in P^{\twocol}(k,l)$ is obtained by reflection at the vertical axis and inverting all colors. The adjoint $p^*\in P_{\twocol}(l,k)$ however is given by turning $p$ upside down -- and the colors remain unchanged. A category of partitions $\CC\subset P^{\twocol}$ is a collection of subsets $\CC(k,l)\subset P(k,l)$ (for all $k,l\in\N_0$) which is closed under tensor product, composition, involution and verticolor reflection containing $\paarpartwb$, $\paarpartbw$, $\baarpartwb$, $\baarpartbw$ and $\idpartww$ as well as $\idpartbb$. See \cite{TWcomb} and Appendix B.

\subsubsection{The relations associated to two-colored partitions}

Again, we define $\delta_p(i,j)$ for two multi indices $i$ and $j$ as before, regardless of the colors of $p\in P^{\twocol}(k,l)$. 
Furthermore, for all $1\leq s\leq k$ and all $1\leq t\leq l$ we define:
\[\epsilon^{\textnormal{up}}_p(s):=\begin{cases} 1 &\textnormal{if the $s$-th upper point of $p$ is white}\\ * &\textnormal{if the $s$-th upper point of $p$ is black}\end{cases}\]
and
\[\epsilon^{\textnormal{low}}_p(t):=\begin{cases} 1 &\textnormal{if the $t$-th lower point of $p$ is white}\\ * &\textnormal{if the $t$-th lower point of $p$ is black}\end{cases}\]
We then say that generators $u_{ij}$ of a unital $C^*$-algebra $A$ \emph{fulfill the relations} $R(p)$, if for all multi indices $\alpha$ and $\beta$ we have:
\[\boxed{\sum_{\gamma_1,\ldots,\gamma_k=1}^n \delta_p(\gamma,\beta) u_{\gamma_1\alpha_1}^{\epsilon^{\textnormal{up}}_p(1)}\ldots u_{\gamma_k\alpha_k}^{\epsilon^{\textnormal{up}}_p(k)}
=\sum_{\gamma_1',\ldots,\gamma_l'=1}^n \delta_p(\alpha,\gamma') u_{\beta_1\gamma_1'}^{\epsilon^{\textnormal{low}}_p(1)}\ldots u_{\beta_l\gamma_l'}^{\epsilon^{\textnormal{low}}_p(l)}}\]
Again, we use the convention for $k=0$ or $l=0$ as in Definition \ref{DefRP}.

\begin{ex}\label{ExTwocol}
Here are some examples for relations $R(p)$, see also Appendix A.
\begin{itemize}
\item[(a)] $R(\idpartwb)=R(\idpartbw)$: all $u_{ij}$ are self-adjoint.
\item[(b)] $R(\idpartbw\idpartwb)=R(\idpartwb\idpartbw)$: $u_{ij}u_{kl}^*=u_{ij}^*u_{kl}$, in particular all $u_{ij}$ are normal.
\item[(c)] The relations for the partitions $p=\paarpartwb\in P^{\twocol}(0,2)$ and $p=\baarpartbw\in P^{\twocol}(2,0)$ together are equivalent to saying that the matrix $u$ is unitary.
\item[(d)] For $p=\paarpartbw$ and $p=\baarpartwb$, we have that $\bar u:=(u_{ij}^*)$ is unitary.
\end{itemize}
\end{ex}

\subsubsection{Definition of partition $C^*$-algebras in the non-selfadjoint case}

If for $X\subset P^{\twocol}$ the unital universal $C^*$-algebra
\[\boxed{A_n(X):=C^*(1, u_{ij}, i,j=1,\ldots,n\;|\; \textnormal{the relations $R(p)$ hold for all }p\in X)}\]
exists, we say that $X$ is \emph{$n$-admissible} and we call $A_n(X)$ the \emph{partition $C^*$-algebra associated to} $X$, in analogy to Definition \ref{DefCStar}. 
Note that we may embed the class $P$ of (one-colored) partitions into the class $P^{\twocol}$ of two-colored partitions in a canonical way, by considering all points of a partition $p\in P$ as white points. Example \ref{ExTwocol}(a) then gives the link between the above definition of $A_n(X)$ and Definition \ref{DefCStar}.
Like in Lemma \ref{LemAdm}, one can show that $X$ is $n$-admissible, if one of the partitions $\paarpartwb$, $\paarpartbw$, $\baarpartwb$ and $\baarpartbw$ is in $X$.

\begin{ex}\label{ExCStar2}
As an example, consider
\[A_n(\{\paarpartwb,\paarpartbw, \baarpartwb,\baarpartbw\})=C^*(u_{ij}\;|\; u \textnormal{ and } \bar u\textnormal{ are unitary})\]
as introduced by Wang when defining his free unitary quantum group \cite{WaOn}.
\end{ex}

\begin{ex}\label{ExCStar2}
Another example is
\[A_n(\{\paarpartwb,\baarpartbw\})=C^*(u_{ij}\;|\; u \textnormal{ is unitary})\]
as introduced by Brown \cite{Unc}. Note that $\bar u$ (or equivalently $u^t$) in this $C^*$-algebra is not unitary, see \cite[Sect 4.1]{WaOn}.
\end{ex}

\subsubsection{The relations and the category operations; partition calculus}

Defining $H_n(X)\subset P^{\twocol}$ analogously to Definition \ref{DefH}, the analog of Lemma \ref{LemAHXGleichAX} holds true as well as Theorem \ref{mainthm}, when replacing $\paarpart$ and $\baarpart$ by $\paarpartwb$ and $\baarpartbw$ (or by $\paarpartbw$ and $\baarpartwb$). As in Section \ref{SectPartCalc}, we have a partition calculus (see also Appendix B).

\subsection{The non-unital case}

We now define non-unital partition $C^*$-algebras, which are linked to the so called \emph{Boolean independence} in  free probability theory. The combinatorics of Boolean independence is the one of interval partitions.

\subsubsection{The combinatorial data: interval partitions}

A partition $p\in P(0,l)$ is an \emph{interval partition}, if whenever $1\leq i<j\leq l$ are in a block of $p$, then so are all $i<s<j$. In other words, blocks in interval partitions consist only of consecutive points. Interval partitions form a subset of $P$.

\subsubsection{The relations in the non-unital situation}

We are mainly interested in defining non-unital partition $C^*$-algebras for interval partitions; however, the following definition holds for any partition $p\in P(k,l)$.
Let $A$ be a not necessarily unital $C^*$-algebra with generators $u_{ij}$ and a projection $P_0$. We say that the relations $R(p)$ are fulfilled, if all $u_{ij}$ are self-adjoint and for all multi indices $\alpha$ and $\beta$ we have:
\[P_0\sum_{\gamma_1,\ldots,\gamma_k=1}^n \delta_p(\gamma,\beta) u_{\gamma_1\alpha_1}\ldots u_{\gamma_k\alpha_k}
=\sum_{\gamma_1',\ldots,\gamma_l'=1}^n \delta_p(\alpha,\gamma') u_{\beta_1\gamma_1'}\ldots u_{\beta_l\gamma_l'}P_0\]
Again, we use the convention that the left hand side reduces to $\delta_p(\emptyset,\beta)P_0$ if $k=0$, and likewise  $\delta_p(\alpha,\emptyset)P_0$ for the right hand side in the case $l=0$.

\begin{ex}
Some examples of relations $R(p)$ are the follwing.
\begin{itemize}
\item[(a)] $R(\idpart)$: $u_{ij}$ commute with $P_0$.
\item[(b)] If $p=\paarpart\in P(0,2)$, then $R(p)$ is $\sum_k u_{ik}u_{jk}P_0=\delta_{ij}P_0$. Together with $R(p)$ for $p=\;\baarpart\;\in P(2,0)$ this amounts to Liu's definition of $P_0$-orthogonality of $u$ \cite[Def. 4.2]{Liu}.
\item[(c)] If $p=\;\crosspart\;\in P(2,2)$, then $R(p)$ does \emph{not} imply that all $u_{ij}$ commute. We only have $P_0u_{ij}u_{kl}=u_{kl}u_{ij}P_0$.
\end{itemize}
\end{ex}

\subsubsection{Definition of partition $C^*$-algebras in the non-unital case}

For $X\subset P$, we define the \emph{non-unital partition $C^*$-algebra associated to} $X$ by the not necessarily unital universal $C^*$-algebra
\[A_n(X):=C^*(P_0, u_{ij}, i,j=1,\ldots,n\;|\; \textnormal{the relations $R(p)$ hold for all }p\in X)\]
in case it exists.
Note that we easily recover all examples of relations from \cite[Def. 4.2]{Liu} apart from $P_0$-magic (i.e. turning all entries $u_{ij}$ into projections). Moreover, sending $P_0\mapsto 1$, we recover the unital situation of Section \ref{SectOnecolor}. Likewise, if $\idpart\in H_n(X)$, then $P_0$ is the unit in $A_n(X)$ and again we are in the unital situation.

\subsubsection{The relations and the category operations}

The analog of Lemma \ref{LemAHXGleichAX} holds for non-unital partition $C^*$-algebras. As for the analog of Theorem \ref{mainthm}, note that in general, $\idpart\notin H_n(X)$. Moreover, (c), (d) and (e) do not hold in general, if the $u_{ij}$ do not commute with $P_0$. Indeed, $R(p)$ and $R(q)$ are relations of the form
\[P_0X_p=Y_pP_0 \qquad \textnormal{and}\qquad P_0X_q=Y_qP_0\]
and a proof similar to Theorem \ref{mainthm}(c) would follow the scheme
\[P_0X_p=Y_pP_0=X_qP_0\]
disallowing us to conclude to $Y_qP_0$ in the end. Likewise for (d) and (e). However, $H_n(X)$ is closed under the tensor product by adapting the proof of Theorem \ref{mainthm} with respect to the following scheme:
\[P_0X_pX_q=Y_pP_0X_q=Y_pY_qP_0\]
This matches nicely with the fact that interval partitions are closed under taking the tensor product, but not under  composition. Thus, this definition of non-unital partition $C^*$-algebras is suitable for their use in the realm of quantum symmetries of Boolean independence.
Note that in principal, we have several possibilities how to insert $P_0$ into the relations $R(p)$. However, neither  $P_0X=P_0Y$, nor $XP_0=YP_0$, nor $P_0XP_0=P_0YP_0$ would yield the closure of $H_n(X)$ under taking the tensor product.

\subsection{Linear combinations of relations}

The general theory of Woronowicz's \linebreak Tannaka-Krein duality and Banica-Speicher quantum groups admits yet another generalization  of partition $C^*$-algebras. It is very likely, that they open the door to a whole new class of quantum groups -- however, we do not have a single explicit example at the moment. The difficulty begins on a purely $C^*$-algebraic level.

By $\C P(k,l)$ we denote the set of formal linear combinations $\sum_{p\in P(k,l)} a_p p$, 
where $a_p\in \C$ are some scalars. The collection of all sets $\C P(k,l)$ is denoted by $\C P$. Note that we only take linear combinations of partitions with a fixed number of upper and lower points.
The operations on $P$ extend to $\C P$ in the following way. Let $x=\sum_pa_pp\in \C P(k,l)$ and $y=\sum_qb_qq\in \C P(k',l')$.
\begin{itemize}
\item The tensor product is defined as $x\otimes y:= \sum_{p,q}a_pb_qp\otimes q$.
\item The composition is defined as $yx:= \sum_{p,q}n^{\rl(q,p)}a_pb_qqp$, if $l=k'$. The factor $\rl(q,p)$ arises naturally from Banica-Speicher's theory \cite{BS}.
\item The vertically reflected version is defined as $\tilde x:=\sum_p a_p\bar p$.
\item The involution is defined as $x^*:=\sum_p \overline{a_p} p^*$.
\end{itemize}
The relations $R(x)$  associated to $x\in\C P(k,l)$ are for self-adjoint generators $u_{ij}$:
\[\sum_p\sum_{\gamma_1,\ldots,\gamma_k=1}^n a_p\delta_p(\gamma,\beta) u_{\gamma_1\alpha_1}\ldots u_{\gamma_k\alpha_k}
=\sum_p\sum_{\gamma_1',\ldots,\gamma_l'=1}^n a_p\delta_p(\alpha,\gamma') u_{\beta_1\gamma_1'}\ldots u_{\beta_l\gamma_l'},\]
treating the cases $k=0$ or $l=0$ and defining $A_n(X)$ in the known way. It is straightforward to prove that the analogs of Lemma \ref{LemAHXGleichAX} and Theorem \ref{mainthm} hold. The following question is completely open for the moment.

\begin{quest}
Find an example of a subset $X\subset \C P$, such that $A_n(X)$ cannot be written as a partition quantum group $A_n(Y)$ for some $Y\subset P$ in the sense of Definition \ref{DefCStar}.
\end{quest}

\subsection{General coefficient case}

The most general situation in Woronowicz's theory amounts to relations of the form:
\[\sum_{\gamma_1,\ldots,\gamma_k=1}^n b(\gamma,\beta) u_{\gamma_1\alpha_1}^{\epsilon^{\textnormal{up}}_p(1)}\ldots u_{\gamma_k\alpha_k}^{\epsilon^{\textnormal{up}}_p(k)}
=\sum_{\gamma_1',\ldots,\gamma_l'=1}^n b(\alpha,\gamma') u_{\beta_1\gamma_1'}^{\epsilon^{\textnormal{low}}_p(1)}\ldots u_{\beta_l\gamma_l'}^{\epsilon^{\textnormal{low}}_p(l)},\]
for some coefficients $b(\alpha,\beta)\in\C$ and possibly non-selfadjoint generators $u_{ij}$.

\subsection{Deformations}

Two other important examples of compact matrix quantum groups are $O^+(Q)$ and $U^+(Q)$, where $Q\in GL_n(\C)$ is an invertible matrix. They were defined by Wang and Van Daele \cite{VW} and they include Woronowicz's famous $SU_q(2)$ quantum group \cite{WoSU}. Banica gave an alternative definition of $O^+(Q)$ and we follow his approach \cite{POn1}. The underlying $C^*$-algebras of $O^+(Q)$ and $U^+(Q)$ fit into the framework of partition $C^*$-algebras with a slight deformation:
\begin{align*}
&B_u(Q):=C^*(1,u_{ij}, i,j=1,\ldots,n \;|\; u \textnormal{ and } Q\bar uQ^{-1} \textnormal{ are unitary}),&&\textnormal{for general }Q\\
&B_o(Q):=C^*(1,u_{ij}, i,j=1,\ldots,n \;|\; u = Q\bar uQ^{-1} \textnormal{ is unitary}),&&\textnormal{assuming }Q\bar Q=\pm 1
\end{align*} 
For $Q=E$ the identity matrix, we recover Examples \ref{ExCStar}(a) and \ref{ExCStar2}. With \[Q=\begin{pmatrix}0&1\\-q^{-1}&0\end{pmatrix}\]
we have that $B_o(Q)$ is the $C^*$-algebra underlying Woronowicz's $SU_q(2)$.
Note that the relations of $B_u(Q)$ and $B_o(Q)$ may be seen as deformations of the relations in Example \ref{ExTwocol}(a), (c) and (d). It is not clear how to generalize it to  deformations of relations $R(p)$ for arbitrary partitions $p\in P^{\twocol}$.

\subsection{Spatial partition $C^*$-algebras}

In \cite{CeWe}, the author introduced in joint work with C\'ebron $C^*$-algebras associated with three-dimensional partitions. For  $m\in\N$ and $k,l\in\N_0$ we consider partitions of the set
\[\{1,\ldots,k,k+1,\ldots,k+l\}\times\{1,\ldots,m\}\]
into disjoint subsets. We represent such \emph{spatial partitions} $p\in P^{(m)}(k,l)$ by three dimensional pictures, see \cite[Ex. 2.3]{CeWe}. Again, we may define operations like tensor product, composition etc. Given a spatial partition $p\in P^{(m)}(k,l)$, we say that generators $u_{IJ}$ with
\[I,J\in[n_1\times\ldots\times n_m]:=\{1,\ldots,n_1\}\times\{1,\ldots,n_2\}\times\ldots\times\{1,\ldots,n_m\}\]
satisfy the relations $R(p)$, if for all tuples $A=(A_1,\ldots,A_k)$ and $B=(B_1,\ldots,B_l)$ of multi indices, we have:
\[\sum_{G_1,\ldots,G_k\in [n_1\times\ldots\times n_m]} \delta_p(G,B) u_{G_1A_1}\ldots u_{G_kA_k}=\sum_{G_1',\ldots,G_l'\in [n_1\times\ldots\times n_m]} \delta_p(A,G') u_{B_1G_1'}\ldots u_{B_lG_l'}\]
See \cite[Sect. 3.6]{CeWe} for details.

\subsection{Graph dependend partial commutativity}

In \cite{SpWe}, the author considered certain quotients of partition $C^*$-algebras, in joint work with Speicher. Given a matrix $\epsilon\in M_n(\{0,1\})$ with $\epsilon_{ii}=0$ and $\epsilon_{ij}=\epsilon_{ji}$, we define the relations $R^\epsilon$ for selfadjoint elements $u_{ij}$ by
\[u_{ik}u_{jl}=\begin{cases}
u_{jl}u_{ik}&\text{if  $\epsilon_{ij}=1$ and $\epsilon_{kl}=1$}\\
u_{jk}u_{il}&\text{if  $\epsilon_{ij}=1$ and $\epsilon_{kl}=0$}\\
u_{il}u_{jk}&\text{if  $\epsilon_{ij}=0$ and $\epsilon_{kl}=1$}
\end{cases}\]
and the relations $\mathring R^\epsilon$ by
\[u_{ik}u_{jl}=\begin{cases}
u_{jl}u_{ik}&\text{if  $\epsilon_{ij}=1$ and $\epsilon_{kl}=1$}\\
0&\text{if  $\epsilon_{ij}=1$ and $\epsilon_{kl}=0$}\\
0&\text{if  $\epsilon_{ij}=0$ and $\epsilon_{kl}=1$}
\end{cases}.\]
We then consider quotients of partition $C^*$-algebras (in particular those with respect to Example \ref{ExCStar}) by the relations $R^\epsilon$ or $\mathring R^\epsilon$. These partition $C^*$-algebras may be viewed as partially commutative partition $C^*$-algebras, where the commutativity is controlled by an undirected graph $\Gamma$ with adjacency matrix $\epsilon$. In \cite{BiAut} it is shown how the quotient of the $C^*$-algebra of Example \ref{ExCStar}(b) by the relations $\mathring R^\epsilon$ may be used to define a quantum automorphism group of $\Gamma$. See also \cite{SW} for links to quantum symmetries of graph $C^*$-algebras.

\appendix
\section*{APPENDIX A: List of $C^*$-algebraic relations}
\label{AppRel}
\setcounter{section}{1}
\setcounter{thm}{0}

\subsection{Examples of relations associated to partitions}
\label{SectAppRel}

In the following, we will deal with colored partitions $p\in P^{\twocol}(k,l)$ as in Section \ref{SectTwoCol}. Let us begin with some notation. We denote by $\singletonw\in P^{\twocol}(0,1)$ the partition consisting in a single point. We denote by 
$b_l\in P^{\twocol}(0,l)$
the partition consisting in a single block containing $l$ white lower points, so:
\[b_1=\singletonw, \qquad b_2=\paarpartww,\qquad b_3=\dreipartwww,\qquad\ldots\]
Given a partition $p\in P^{\twocol}(0,l)$ and a number $0<t<l$, we denote by
\[\rot_t(p)\in P^{\twocol}(t,l-t)\]
the partition obtained from rotating the first $t$ points of $p$ to the upper line, see Section \ref{SectAppMoreOper} for this operation.
The following list of relations associated to partitions is partially copied from \cite{TWop}. In the self-adjoint case ($u_{ij}=u_{ij}^*$) simply replace all black points by white points. 

\subsubsection{Relations built from identity partitions}

\[
R(\idpartww)=R(\idpartbb): u_{ij}=u_{ij}
\qquad\qquad\qquad\qquad\qquad\qquad\qquad\qquad\qquad\qquad\qquad\qquad\qquad\qquad\qquad
\]
\[
R(\idpartwb)=R(\idpartbw)=R(\rot_1(\paarpartww)): u_{ij}=u_{ij}^*, \textnormal{ i.e. } u=\bar u
\qquad\qquad\qquad\qquad\qquad\qquad\qquad\qquad\qquad\qquad\qquad\qquad\qquad\qquad\qquad
\]
\[
R(\idpartbw\otimes \idpartwb)=R(\idpartwb\otimes \idpartbw)=R(\rot(\paarpartww\otimes\paarpartbb)): u_{ij}^*u_{kl}=u_{ij}u_{kl}^*
\qquad\qquad\qquad\qquad\qquad\qquad\qquad\qquad\qquad\qquad\qquad\qquad\qquad\qquad\qquad
\]
\[
R(\idpartwb^{\otimes k})=R(\idpartbw^{\otimes k}): u_{i_1j_1}\ldots u_{i_lj_l}= u_{i_1j_1}^*\ldots u_{i_lj_l}^*
\qquad\qquad\qquad\qquad\qquad\qquad\qquad\qquad\qquad\qquad\qquad\qquad\qquad\qquad\qquad
\]

\subsubsection{Relations built from pair partitions}

\[
R(\paarpartww): \sum_k u_{ik}u_{jk}=\delta_{ij}, \textnormal{ i.e. } uu^t=1 
\qquad\qquad\qquad\qquad\qquad\qquad\qquad\qquad\qquad\qquad\qquad\qquad\qquad\qquad\qquad
\]
\[
R(\baarpartww): \sum_k u_{ki}u_{kj}=\delta_{ij}, \textnormal{ i.e. } u^tu=1
\qquad\qquad\qquad\qquad\qquad\qquad\qquad\qquad\qquad\qquad\qquad\qquad\qquad\qquad\qquad
\]
\[
R(\paarpartwb): \sum_k u_{ik}u_{jk}^*=\delta_{ij}, \textnormal{ i.e. } uu^*=1 
\qquad\qquad\qquad\qquad\qquad\qquad\qquad\qquad\qquad\qquad\qquad\qquad\qquad\qquad\qquad
\]
\[
R(\baarpartbw): \sum_k u_{ki}^*u_{kj}=\delta_{ij}, \textnormal{ i.e. } u^*u=1
\qquad\qquad\qquad\qquad\qquad\qquad\qquad\qquad\qquad\qquad\qquad\qquad\qquad\qquad\qquad
\]
\[
R(\paarpartbw): \sum_k u_{ik}^*u_{jk}=\delta_{ij}, \textnormal{ i.e. } \bar u(\bar u)^*=1
\qquad\qquad\qquad\qquad\qquad\qquad\qquad\qquad\qquad\qquad\qquad\qquad\qquad\qquad\qquad
\]
\[
R(\baarpartwb): \sum_k u_{ki}u_{kj}^*=\delta_{ij}, \textnormal{ i.e. } (\bar u)^*\bar u=1 
\qquad\qquad\qquad\qquad\qquad\qquad\qquad\qquad\qquad\qquad\qquad\qquad\qquad\qquad\qquad
\]
\[
R(\paarbaarpartwwww): \delta_{i'j'}\sum_k u_{ki}u_{kj}=\delta_{ij}\sum_k u_{i'k}u_{j'k}
\qquad\qquad\qquad\qquad\qquad\qquad\qquad\qquad\qquad\qquad\qquad\qquad\qquad\qquad\qquad
\]
\[
R(\paarpartww\otimes\paarpartbb): \left(\sum_k u_{ik}u_{jk}\right)\left(\sum_l u_{i'l}^*u_{j'l}^*\right)=\delta_{ij}\delta_{i'j'}
\qquad\qquad\qquad\qquad\qquad\qquad\qquad\qquad\qquad\qquad\qquad\qquad\qquad\qquad\qquad
\]

\subsubsection{Relations using crossing partitions}

\[
R(\crosspartwwww)=R(\crosspartbbbb): u_{ij}u_{kl}=u_{kl}u_{ij}
\qquad\qquad\qquad\qquad\qquad\qquad\qquad\qquad\qquad\qquad\qquad\qquad\qquad\qquad\qquad
\]
\[
R(\crosspartwbbw)=R(\crosspartbwwb): u_{ij}u_{kl}^*=u_{kl}^*u_{ij}
\qquad\qquad\qquad\qquad\qquad\qquad\qquad\qquad\qquad\qquad\qquad\qquad\qquad\qquad\qquad
\]

\subsubsection{Relations built from singleton partitions}

\[
R(\singletonw^{\otimes k}):\left(\sum_{l_1} u_{l_1j_1}\right)\ldots \left(\sum_{l_k} u_{l_kj_k}\right)=1
\qquad\qquad\qquad\qquad\qquad\qquad\qquad\qquad\qquad\qquad\qquad\qquad\qquad\qquad\qquad
\]
\[
R(\singletonw\otimes\singletonb): \left(\sum_{k} u_{kj}\right)\left(\sum_{l} u_{li}^*\right)=1
\qquad\qquad\qquad\qquad\qquad\qquad\qquad\qquad\qquad\qquad\qquad\qquad\qquad\qquad\qquad
\]
\[
R(\idpartsingletonww)=R(\idpartsingletonbb)=R(\rot_1(\singletonw\otimes\singletonb)): \left(\sum_{k} u_{kj}\right)=\left(\sum_{l} u_{il}\right)
\qquad\qquad\qquad\qquad\qquad\qquad\qquad\qquad\qquad\qquad\qquad\qquad\qquad\qquad\qquad
\]
\[
R(\rot_t(\singletonw^{\otimes s+t})): \left(\sum_{k_1} u_{k_1j_1}\right)\ldots\left(\sum_{k_s} u_{k_sj_s}\right)=\left(\sum_{l_1} u_{i_1l_1}^*\right)\ldots\left(\sum_{l_t} u_{i_tl_t}^*\right)
\qquad\qquad\qquad\qquad\qquad\qquad\qquad\qquad\qquad\qquad\qquad\qquad\qquad\qquad\qquad
\]

\subsubsection{Relations built from three blocks}

\[
R(\dreipartrot): u_{ki}u_{kj}=\delta_{ij}u_{ki}
\qquad\qquad\qquad\qquad\qquad\qquad\qquad\qquad\qquad\qquad\qquad\qquad\qquad\qquad\qquad
\]
\[
R(\downdreipartrot): u_{ik}u_{jk}=\delta_{ij}u_{ik}
\qquad\qquad\qquad\qquad\qquad\qquad\qquad\qquad\qquad\qquad\qquad\qquad\qquad\qquad\qquad
\]

\subsubsection{Relations built from four blocks}

\[
R(\vierpartrotwbwb)=R(\vierpartrotbwbw)=R(\rot_2(\vierpartwbwb)): u_{ki}u_{kj}^*=u_{ik}u_{jk}^*=0 \emph{ if }i\neq j
\qquad\qquad\qquad\qquad\qquad\qquad\qquad\qquad\qquad\qquad\qquad\qquad\qquad\qquad\qquad
\]
\[
R(\vierpartrotwwww)=R(\vierpartrotbbbb)=R(\rot_2(\vierpartwwbb)): u_{ki}u_{kj}=u_{ik}u_{jk}=0 \emph{ if }i\neq j
\qquad\qquad\qquad\qquad\qquad\qquad\qquad\qquad\qquad\qquad\qquad\qquad\qquad\qquad\qquad
\]
\[
R(\vierpartwbwb): \sum_k u_{i_1k}u_{i_2k}^*u_{i_3k}u_{i_4k}^*=\delta_{i_1i_2}\delta_{i_2i_3}\delta_{i_3i_4}
\qquad\qquad\qquad\qquad\qquad\qquad\qquad\qquad\qquad\qquad\qquad\qquad\qquad\qquad\qquad
\]
\[
R(\vierpartwwbb): \sum_k u_{i_1k}u_{i_2k}u_{i_3k}^*u_{i_4k}^*=\delta_{i_1i_2}\delta_{i_2i_3}\delta_{i_3i_4}
\qquad\qquad\qquad\qquad\qquad\qquad\qquad\qquad\qquad\qquad\qquad\qquad\qquad\qquad\qquad
\]
\[
R (b_k): \sum_l u_{i_1l}\ldots u_{i_kl}=\delta_{i_1i_2}\delta_{i_2i_3}\ldots\delta_{i_{k-1}i_k}
\qquad\qquad\qquad\qquad\qquad\qquad\qquad\qquad\qquad\qquad\qquad\qquad\qquad\qquad\qquad
\]
\[
R (\rot_d(b_d\otimes \tilde b_d)): \sum_k \delta_{i_1i_2}\delta_{i_2i_3}\ldots\delta_{i_{d-1}i_d}u_{kj_1}\ldots u_{kj_d}=\sum_l \delta_{j_1j_2}\delta_{j_2j_3}\ldots\delta_{j_{d-1}j_d}u_{i_1l}\ldots u_{i_dl}
\qquad\qquad\qquad\qquad\qquad\qquad\qquad\qquad\qquad\qquad\qquad\qquad\qquad\qquad\qquad
\]
\[
R (\rot_t(b_{s+t})): \delta_{i_1i_2}\delta_{i_2i_3}\ldots\delta_{i_{t-1}i_t}u_{i_1j_1}\ldots u_{i_1j_s}= \delta_{j_1j_2}\delta_{j_2j_3}\ldots\delta_{j_{s-1}j_s}u_{i_1j_1}^*\ldots u_{i_tj_1}^*
\qquad\qquad\qquad\qquad\qquad\qquad\qquad\qquad\qquad\qquad\qquad\qquad\qquad\qquad\qquad
\]

\subsubsection{Further relations}

\[
R(\halflibpart): u_{ij}u_{kl}u_{st}=u_{st}u_{kl}u_{ij}
\qquad\qquad\qquad\qquad\qquad\qquad\qquad\qquad\qquad\qquad\qquad\qquad\qquad\qquad\qquad
\]
Let $h_s\in P^{\twocol}(0,2s)$ be the partition consisting in two blocks, each on $s$ white lower points, the first one being placed on all odd numbers, the second one on all even numbers (see also \cite[Sect. 1.2]{RWfull}):
\[R(\rot_s(h_s)): (u_{ij}u_{kl})^s=(u_{kl}u_{ij})^s
\qquad\qquad\qquad\qquad\qquad\qquad\qquad\qquad\qquad\qquad\qquad\qquad\qquad\qquad\qquad
\]
For the fat crossing partition in $P^{\twocol}(4,4)$ consisting in two blocks on white points, each consisting in two consecutive upper points and two consecutive lower points (see also  \cite[Sect. 1.2]{RWfull}):
\[
R(\fatcross): \delta_{kk'}\delta_{ll'}u_{ki}u_{ki'}u_{lj}u_{lj'}=\delta_{ii'}\delta_{jj'}u_{lj}u_{l'j}u_{ki}u_{k'i}
\qquad\qquad\qquad\qquad\qquad\qquad\qquad\qquad\qquad\qquad\qquad\qquad\qquad\qquad\qquad
\]
For the pair positioner partition in $P^{\twocol}(3,3)$ consisting in a block of two white points (one upper, one lower) and a block of four white points (two upper, two lower) (see also  \cite[Sect. 1.2]{RWfull}):
\[
R(\legpart): \delta_{kk'}u_{ki}u_{ki'}u_{st}=\delta_{ii'}u_{st}u_{ki}u_{k'i}
\qquad\qquad\qquad\qquad\qquad\qquad\qquad\qquad\qquad\qquad\qquad\qquad\qquad\qquad\qquad
\]
The partitions consisting in singletons and pairs:
\[
R(\rot_{d+1}(\positionerd)):u_{ij}\left(\sum_{k_1} u_{k_1j_1}\right)\ldots \left(\sum_{k_d} u_{k_dj_d}\right)=\left(\sum_{l_1} u_{i_1l_1}\right)\ldots \left(\sum_{l_d} u_{i_dl_d}\right)u_{ij}
\qquad\qquad\qquad\qquad\qquad\qquad\qquad\qquad\qquad\qquad\qquad\qquad\qquad\qquad\qquad
\]
\[
R(\rot_2(\positionerwbwb)):u_{ij}\left(\sum_{k_1} u_{k_1j_1}^*\right)=\left(\sum_{l_1} u_{i_1l_1}\right)u_{ij}^*
\qquad\qquad\qquad\qquad\qquad\qquad\qquad\qquad\qquad\qquad\qquad\qquad\qquad\qquad\qquad
\]
\[
R(\rot_r(\positionerrpluseins)): u_{ij}\left(\sum_{k_1} u_{k_1j_1}\right)\ldots \left(\sum_{k_{r-1}} u_{k_{r-1}j_{r-1}}\right)
\qquad\qquad\qquad\qquad\qquad\qquad\qquad\qquad\qquad\qquad\qquad\qquad\qquad\qquad\qquad
\]
\[
\qquad\qquad\qquad\qquad\qquad\qquad\qquad\qquad\qquad\qquad\qquad
=\left(\sum_{l_1} u_{i_1l_1}\right)\ldots \left(\sum_{l_{r+1}} u_{i_{r+1}l_{r+1}}\right)u_{ij}^*
\qquad\qquad\qquad\qquad\qquad\qquad\qquad\qquad\qquad\qquad\qquad\qquad\qquad\qquad\qquad
\]

\subsection{Examples of partition $C^*$-algebras}
\label{SectAppExCStar}

\subsubsection{Free quantum orthogonal}
\label{SectAppOn}

The most prominent example of a partition $C^*$-algebra is 
\[A_n(\{\paarpart,\baarpart\})=C^*(u_{ij}\;|\; u_{ij}=u_{ij}^*, u^tu=uu^t=1)\]
underlying Wang's free orthogonal quantum group $O_n^+$, see also Example \ref{ExCStar}(a). It fulfills the Baum-Connes conjecture for quantum groups, it is $K$-amenable and its $K$-groups are $K_0(A_n(\{\paarpart,\baarpart\}))=\Z$ generated by $[1]$, and $K_1(A_n(\{\paarpart,\baarpart\}))=\Z$ generated by $[u]$, see \cite{Vo1}. Its reduced version (see Section \ref{SectRed}) is non-nuclear, exact, simple and has the metric approximation property \cite{POn1,POn2}. The commutative version of $A_n(\{\paarpart,\baarpart\})$ is the algebra $C(O_n)=A_n(\{\paarpart,\baarpart,\crosspart\})$ of continuous functions over the orthogonal group $O_n$. Moreover, one can show $A_n(\{\paarpart,\baarpart\})=A_n(NC_2)$ and $A_n(\{\paarpart,\baarpart,\crosspart\})=A_n(P_2)$ using Corollary \ref{CorBSQG}. See also \cite{We}.

\subsubsection{Free quantum unitary}
\label{SectAppUn}

The non-selfadjoint version of the above $C^*$-algebra is:
\[A_n(\{\paarpartwb,\paarpartbw, \baarpartwb,\baarpartbw\})=C^*(u_{ij}\;|\; u^*u=uu^*=1, \bar u^*\bar u=\bar u\bar u^*=1)\]
It was defined by Wang when introducing his free unitary quantum group $U_n^+$, see Example \ref{ExCStar2}. Its commutative version is the algebra of continuous functions over the unitary group:
\[A_n(\{\paarpartwb,\paarpartbw, \baarpartwb,\baarpartbw,\crosspartwwww,\crosspartwbbw\})=C(U_n)\]
The reduced version of $A_n(\{\paarpartwb,\paarpartbw, \baarpartwb,\baarpartbw\})$ and its envelopping von Neumann algebra share the same properties as the one from Section \ref{SectAppOn}, see also Section \ref{SectRed} and \cite{WeEQGLN,WeLInd}. Its $K_0$-group is $\Z$ generated by $[1]$ and its $K_1$-group is $\Z^2$ generated by $[u]$ and $[\bar u]$. See \cite{Vo3}.

\subsubsection{Free quantum symmetric}
\label{SectAppSn}

Another very important example of a partition $C^*$-algebra is
\[A_n(\{\paarpart,\baarpart,\dreipartrot,\downdreipartrot\})=C^*(u_{ij}\;|\; u_{ij}=u_{ij}^*=u_{ij}^2, \sum_m u_{mi}=\sum_m u_{im}=1)\]
arising in Wang's definition of a free symmetric quantum group $S_n^+$. Its commutative version is $C(S_n)$, where $S_n$ is the symmetric group. Again, $A_n(\{\paarpart,\baarpart,\dreipartrot,\downdreipartrot\})$ is $K$-amenable and its $K$-groups are $\Z^{n^2-2n+2}$ for $K_0$, generated by $[u_{ij}]$ for $i,j<n$ and by $[1]$, and $\Z$ for $K_1$ generated by $[u]$. See \cite{Vo2}. Moreover,\linebreak $A_n(\{\paarpart,\baarpart,\dreipartrot,\downdreipartrot\})=A_n(NC)$ and $C(S_n)=A_n(\{\paarpart,\baarpart,\dreipartrot,\downdreipartrot,\crosspart\})=A_n(P)$, see Corollary \ref{CorBSQG} and \cite{We}. Its reduced version (see Section \ref{SectRed}) is non-nuclear, exact and simple, see \cite{PSn}.

\subsubsection{From the classification of free orthogonal Banica-Speicher quantum groups}
\label{SectAppFreeOrth}

Throughout the classification of free orthogonal Banica-Speicher quantum groups, five more examples of partition $C^*$-algebras play a role, together with those from Section \ref{SectAppOn} and  \ref{SectAppSn}:
\allowdisplaybreaks
\begin{align*}
(1)
&&&A_n(\{\paarpart,\baarpart,\vierpartrot\})=C^*(u_{ij}\;|\; u_{ij}=u_{ij}^*, u^tu=uu^t=1, \\
&&&\qquad\qquad\qquad\qquad\qquad\qquad\qquad\qquad\qquad\qquad\qquad u_{ki}u_{kj}=u_{ik}u_{jk}=0 \textnormal{ if } i\neq j)\\
(2)
&&&A_n(\{\paarpart,\baarpart,\vierpartrot,\idpartsingletonww\})=C^*(u_{ij}\;|\; u_{ij}=u_{ij}^*, u^tu=uu^t=1,\\
&&&\qquad\qquad\qquad\qquad\qquad\qquad\qquad u_{ki}u_{kj}=u_{ik}u_{jk}=0 \textnormal{ if } i\neq j,\sum_k u_{ik}=\sum_k u_{kj})\\
(3)
&&&A_n(\{\paarpart,\baarpart,\singleton,\downsingleton\})=C^*(u_{ij}\;|\; u_{ij}=u_{ij}^*, u^tu=uu^t=1, \sum_k u_{ik}=\sum_k u_{kj}=1)\\
(4)
&&&A_n(\{\paarpart,\baarpart,\idpartsingletonww\})=C^*(u_{ij}\;|\; u_{ij}=u_{ij}^*, u^tu=uu^t=1, \sum_k u_{ik}=\sum_k u_{kj})\\
(5)
&&&A_n(\{\paarpart,\baarpart,\idpartsingletonww,\rot_2(\positioner)\})\\
&&&\qquad\qquad=C^*(u_{ij}\;|\; u_{ij}=u_{ij}^*, u^tu=uu^t=1, r:=\sum_k u_{ik}=\sum_k u_{kj}, ru_{ij}=u_{ij}r)
\end{align*}
Example (1) underlies Bichon's hyperoctahedral quantum group $H_n^+$ \cite{BifW}. Its commutative version are the functions over the wreath product of $\Z / 2\Z$ with $S_n$.
Example (3) gives rise to the bistochastic quantum group $B_n^+$ \cite{BS,We}. We know that (5) is isomorphic to the tensor product of (3) with $C^*(\Z / 2\Z)$, while (2) is isomorphic to a tensor product of the partition $C^*$-algebra in Section \ref{SectAppSn} with $C^*(\Z / 2\Z)$. Moreover, (4) is isomorphic to the free product of (3) with $C^*(\Z / 2\Z)$. Finally, $A_n(\{\paarpart,\baarpart,\singleton,\downsingleton\})$ is isomorphic to $A_{n-1}(\{\paarpart,\baarpart\})$ \cite{Ra,We}. The seven $C^*$-algebras from the above mentioned classification may be ordered as follows, where every map denotes the surjection mapping generators to generators, and where we abbreviate $\langle\{p_1,\ldots,p_m\}\rangle$ for $A_n(\{\paarpart,\baarpart,p_1,\ldots,p_m\})$:
\begin{displaymath}
\xymatrix{
\langle\{\singleton,\downsingleton\}\rangle\ar[d] 
&\langle\{\idpartsingletonww,\rot_2(\positioner)\}\rangle\ar[l]\ar[d] 
&\langle\{\idpartsingletonww\}\rangle\ar[l]
&\langle\emptyset\rangle\ar[l]\ar[d]\\
\langle\{\dreipartrot,\downdreipartrot\}\rangle 
&\langle\{\vierpartrot,\idpartsingletonww\}\rangle\ar[l] 
& &\langle\{\vierpartrot\}\rangle\ar[ll]
 }
\end{displaymath}
None of these seven $C^*$-algebras is exact, if $n\geq 5$, see \cite[Cor. 5.9]{We}.

\subsubsection{Further examples}

The class of orthogonal Banica-Speicher quantum groups is completely classified, see \cite{RWfull}. Some further examples of partition $C^*$-algebras showing up are:
\begin{align*}
&A_n(\{\paarpart,\baarpart,\halflibpart\})=C^*(u_{ij}\;|\; u_{ij}=u_{ij}^*, u^tu=uu^t=1,u_{ij}u_{kl}u_{st}=u_{st}u_{kl}u_{ij})\\
&A_n(\{\paarpart,\baarpart,\vierpartrot,\halflibpart\})=C^*(u_{ij}\;|\; u_{ij}=u_{ij}^*, u^tu=uu^t=1, \\
&\qquad\qquad\quad\qquad\qquad\qquad\qquad\qquad u_{ki}u_{kj}=u_{ik}u_{jk}=0 \textnormal{ if } i\neq j, u_{ij}u_{kl}u_{st}=u_{st}u_{kl}u_{ij})
\end{align*}
See also \cite[Sect. 6]{RWfull} for more examples. In \cite{TWop} many more examples with non-selfadjoint generators may be found.

\appendix
\section*{APPENDIX B: Partition Calculus}
\label{AppCalc}
\setcounter{section}{2}
\setcounter{thm}{0}

The partition calculus in Section \ref{SectPartCalc} is based on Theorem \ref{mainthm} in the sense that certain combinations of relations $R(p_1),\ldots, R(p_m)$ imply other relations $R(q)$. In other words, the strucutre of the partition calculus consists in  statements of the form:
\[p_1,\ldots,p_m\in H_n(X)\quad\Longrightarrow\quad q\in H_n(X)\]
In this appendix, we elaborate more on this partition calculus.
The following is a slight extension of an excerpt from \cite{TWcomb}. Readers interested solely in the case of partition $C^*$-algebras with selfadjoint generators should simply regard all black points as white points.

\subsection{Examples of basic operations}
\label{SectAppBasicOper}

\setlength{\unitlength}{0.5cm}
\newsavebox{\boxpexample}
   \savebox{\boxpexample}
   { \begin{picture}(3,3.5)
     \put(-1,4.35){\partii{1}{1}{2}}
     \put(-1,4.35){\partii{1}{3}{4}}
     \put(-1,0.35){\uppartii{1}{1}{2}}
     \put(-1,0.35){\upparti{2}{3}}
     \put(0.05,0){$\circ$}
     \put(1.05,0){$\circ$}
     \put(2.05,0){$\bullet$}
     \put(0.05,3.3){$\bullet$}
     \put(1.05,3.3){$\circ$}
     \put(2.05,3.3){$\bullet$}
     \put(3.05,3.3){$\circ$}
     \end{picture}}
\newsavebox{\boxqexample}
   \savebox{\boxqexample}
   { \begin{picture}(4,4.5)
     \put(-1,0.35){\upparti{4}{1}}
     \put(-1,0.35){\upparti{2}{2}}
     \put(-1,5.35){\partii{2}{2}{4}}
     \put(-1,5.35){\parti{1}{3}}
     \put(-1,5.35){\parti{1}{5}}
     \put(-1,0.35){\uppartii{1}{3}{4}}
     \put(0.05,0){$\bullet$}
     \put(1.05,0){$\circ$}
     \put(2.05,0){$\bullet$}
     \put(3.05,0){$\circ$}
     \put(0.05,4.3){$\bullet$}
     \put(1.05,4.3){$\bullet$}
     \put(2.05,4.3){$\circ$}
     \put(3.05,4.3){$\bullet$}
     \put(4.05,4.3){$\circ$}
     \end{picture}}
\newsavebox{\boxpexamplelang}
   \savebox{\boxpexamplelang}
   { \begin{picture}(3,4.5)
     \put(-1,5.35){\partii{1}{1}{2}}
     \put(-1,5.35){\partii{1}{3}{4}}
     \put(-1,0.35){\uppartii{1}{1}{2}}
     \put(-1,0.35){\upparti{3}{3}}
     \put(0.05,0){$\circ$}
     \put(1.05,0){$\circ$}
     \put(2.05,0){$\bullet$}
     \put(0.05,4.3){$\bullet$}
     \put(1.05,4.3){$\circ$}
     \put(2.05,4.3){$\bullet$}
     \put(3.05,4.3){$\circ$}
     \end{picture}}
\newsavebox{\boxpq}
   \savebox{\boxpq}
   { \begin{picture}(4,4.5)
     \put(-1,5.35){\partiii{2}{1}{2}{4}}
     \put(-1,5.35){\parti{1}{3}}
     \put(-1,5.35){\parti{1}{5}}
     \put(-1,0.35){\uppartii{1}{1}{2}}
     \put(-1,0.35){\upparti{1}{3}}
     \put(0.05,0){$\circ$}
     \put(1.05,0){$\circ$}
     \put(2.05,0){$\bullet$}
     \put(0.05,4.3){$\bullet$}
     \put(1.05,4.3){$\bullet$}
     \put(2.05,4.3){$\circ$}
     \put(3.05,4.3){$\bullet$}
     \put(4.05,4.3){$\circ$}
     \end{picture}}
\newsavebox{\boxpstern}
   \savebox{\boxpstern}
   { \begin{picture}(3,3.5)
     \put(-1,4.35){\partii{1}{1}{2}}
     \put(-1,0.35){\uppartii{1}{3}{4}}
     \put(-1,0.35){\uppartii{1}{1}{2}}
     \put(-1,0.35){\upparti{3}{3}}
     \put(0.05,3.3){$\circ$}
     \put(1.05,3.3){$\circ$}
     \put(2.05,3.3){$\bullet$}
     \put(0.05,0){$\bullet$}
     \put(1.05,0){$\circ$}
     \put(2.05,0){$\bullet$}
     \put(3.05,0){$\circ$}
     \end{picture}}
\newsavebox{\boxptilde}
   \savebox{\boxptilde}
   { \begin{picture}(3,3.5)
     \put(-1,4.35){\partii{1}{1}{2}}
     \put(-1,4.35){\partii{1}{3}{4}}
     \put(-1,0.35){\uppartii{1}{3}{4}}
     \put(-1,0.35){\upparti{2}{2}}
     \put(1.05,0){$\circ$}
     \put(2.05,0){$\bullet$}
     \put(3.05,0){$\bullet$}
     \put(0.05,3.3){$\bullet$}
     \put(1.05,3.3){$\circ$}
     \put(2.05,3.3){$\bullet$}
     \put(3.05,3.3){$\circ$}
     \end{picture}}
\newsavebox{\boxproteins}
   \savebox{\boxproteins}
   { \begin{picture}(4,3.5)
     \put(-1,4.35){\parti{1}{3}}
     \put(-1,4.35){\partii{1}{4}{5}}
     \put(-1,0.35){\uppartii{1}{2}{3}}
     \put(-1,0.35){\upparti{2}{4}}
     \put(-1,0.35){\upparti{2}{1}}
     \put(0.3,2.3){\line(1,0){2}}
     \put(0.05,0){$\circ$}
     \put(1.05,0){$\circ$}
     \put(2.05,0){$\circ$}
     \put(3.05,0){$\bullet$}
     \put(2.05,3.3){$\circ$}
     \put(3.05,3.3){$\bullet$}
     \put(4.05,3.3){$\circ$}
     \end{picture}}
\newsavebox{\boxprotzwei}
   \savebox{\boxprotzwei}
   { \begin{picture}(5,3.5)
     \put(-1,4.35){\partii{1}{5}{6}}
     \put(-1,0.35){\uppartii{1}{3}{4}}
     \put(-1,0.35){\upparti{2}{5}}
     \put(-1,0.35){\uppartii{1}{1}{2}}
     \put(0.05,0){$\bullet$}
     \put(1.05,0){$\circ$}
     \put(2.05,0){$\circ$}
     \put(3.05,0){$\circ$}
     \put(4.05,0){$\bullet$}
     \put(4.05,3.3){$\bullet$}
     \put(5.05,3.3){$\circ$}
     \end{picture}}
\newsavebox{\boxprotdrei}
   \savebox{\boxprotdrei}
   { \begin{picture}(6,2.5)
     \put(-1,0.35){\uppartiii{2}{1}{2}{7}}
     \put(-1,0.35){\uppartii{1}{3}{4}}
     \put(-1,0.35){\uppartii{1}{5}{6}}
     \put(0.05,0){$\bullet$}
     \put(1.05,0){$\circ$}
     \put(2.05,0){$\bullet$}
     \put(3.05,0){$\circ$}
     \put(4.05,0){$\circ$}     
     \put(5.05,0){$\circ$}     
     \put(6.05,0){$\bullet$}
     \end{picture}}
\newsavebox{\boxprotvier}
   \savebox{\boxprotvier}
   { \begin{picture}(6,1.5)
     \put(-1,0.35){\uppartii{1}{1}{2}}
     \put(-1,0.35){\uppartii{1}{3}{4}}
     \put(-1,0.35){\uppartiii{1}{5}{6}{7}}
     \put(0.05,0){$\bullet$}
     \put(1.05,0){$\circ$}
     \put(2.05,0){$\circ$}     
     \put(3.05,0){$\circ$}     
     \put(4.05,0){$\bullet$}
     \put(5.05,0){$\bullet$}
     \put(6.05,0){$\circ$}
     \end{picture}}
\newsavebox{\boxrexample}
   \savebox{\boxrexample}
   { \begin{picture}(3,3.5)
     \put(-1,4.35){\partii{1}{1}{2}}
     \put(-1,4.35){\partii{1}{3}{4}}
     \put(-1,0.35){\uppartii{1}{1}{2}}
     \put(-1,0.35){\upparti{1}{3}}
     \put(0.05,0){$\circ$}
     \put(1.05,0){$\circ$}
     \put(2.05,0){$\bullet$}
     \put(0.05,3.3){$\bullet$}
     \put(1.05,3.3){$\circ$}
     \put(2.05,3.3){$\bullet$}
     \put(3.05,3.3){$\circ$}
     \end{picture}}
\newsavebox{\boxrstern}
   \savebox{\boxrstern}
   { \begin{picture}(3,3.5)
     \put(-1,4.35){\partii{1}{1}{2}}
     \put(-1,4.35){\parti{1}{3}}
     \put(-1,0.35){\uppartii{1}{1}{2}}
     \put(-1,0.35){\uppartii{1}{3}{4}}
     \put(0.05,0){$\bullet$}
     \put(1.05,0){$\circ$}
     \put(2.05,0){$\bullet$}
     \put(3.05,0){$\circ$}     
     \put(0.05,3.3){$\circ$}
     \put(1.05,3.3){$\circ$}
     \put(2.05,3.3){$\bullet$}
     \end{picture}}
\newsavebox{\boxrsternr}
   \savebox{\boxrsternr}
   { \begin{picture}(3,3.5)
     \put(-1,4.35){\partii{1}{1}{2}}
     \put(-1,4.35){\partii{1}{3}{4}}
     \put(0.05,3.3){$\bullet$}
     \put(1.05,3.3){$\circ$}
     \put(2.05,3.3){$\bullet$}
     \put(3.05,3.3){$\circ$}
     \put(-1,0.35){\uppartii{1}{1}{2}}
     \put(-1,0.35){\uppartii{1}{3}{4}}
     \put(0.05,0){$\bullet$}
     \put(1.05,0){$\circ$}
     \put(2.05,0){$\bullet$}
     \put(3.05,0){$\circ$}     
     \end{picture}}     

Consider the following concrete partitions:
\begin{center}
\begin{picture}(25,5.5)
 \put(0,1.5){$p=$}
 \put(1.5,0){\usebox{\boxpexample}}
 \put(10,1.5){$q=$}
 \put(11.5,0){\usebox{\boxqexample}}
 \put(20,1.5){$r=$}
 \put(21.5,0){\usebox{\boxrexample}}
\end{picture}
\end{center}
Here are examples of the operations, where $\rl(p,q)$ denotes the number of removed loops throughout the composition:
\begin{center}
\begin{picture}(25,5.5)
 \put(0,1.5){$p\otimes q=$}
 \put(3,0){\usebox{\boxpexamplelang}}
 \put(7,0){\usebox{\boxqexample}}
\end{picture}
\end{center}
\begin{center}
\begin{picture}(25,4.5)
 \put(0,1.5){$p^*=$}
 \put(2,0){\usebox{\boxpstern}}
 \put(10,1.5){$\tilde p=$}
 \put(12,0){\usebox{\boxptilde}}
 \put(20,1.5){$r^*=$}
 \put(21.5,0){\usebox{\boxrstern}} 
\end{picture}
\end{center}
\begin{center}
\begin{picture}(25,8.7)
 \put(0,3.1){$pq=$}
 \put(2,0){\usebox{\boxpexample}}
 \put(2,3.3){\usebox{\boxqexample}}
 \put(7.5,3.1){$=$}
 \put(9,1){\usebox{\boxpq}}
 \put(16,3.1){$\rl(p,q)=0$}
\end{picture}
\end{center}
\begin{center}
\begin{picture}(25,8.7)
 \put(0,3.1){$rq=$}
 \put(2,0){\usebox{\boxrexample}}
 \put(2,3.3){\usebox{\boxqexample}}
 \put(7.5,3.1){$=$}
 \put(9,1){\usebox{\boxpq}}
 \put(16,3.1){$\rl(r,q)=1$}
\end{picture}
\end{center}
\begin{center}
\begin{picture}(25,8.7)
 \put(0,3.1){$r^*r=$}
 \put(2,0){\usebox{\boxrstern}}
 \put(2,3.3){\usebox{\boxrexample}}
 \put(7.5,3.1){$=$}
 \put(9,1){\usebox{\boxrsternr}}
 \put(16,3.1){$\rl(r^*,r)=2$}
\end{picture}
\end{center}

The following lemma may be found in \cite[Lem. 1.1(d)]{TWcomb}. In fact, it holds for any set $\CC\subset P$ which is closed under tensor products and compositions and which contains the partition $\idpart$. The same holds true for generalizing Lemma \ref{LemAux}.

\begin{lem}
Let $X\subset P^{\twocol}$ be $n$-admissible and let $p\in P^{\twocol}(0,l)$ and $q\in P^{\twocol}(0,m)$. If $p,q\in H_n(X)$, then every partition obtained from placing $q$ between two legs of $p$ is in $H_n(X)$.
\end{lem}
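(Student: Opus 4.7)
The plan is to realize every such insertion as a composition of $p$ with a suitable tensor product involving $q$ and identity partitions, invoking only parts~(a)--(c) of the colored analog of Theorem~\ref{mainthm}. Write $c_1,\dots,c_l\in\{\circ,\bullet\}$ for the colors of the $l$ lower points of $p$ in left-to-right order, let $e_j$ denote $\idpartww$ if $c_j=\circ$ and $\idpartbb$ if $c_j=\bullet$, and fix a position $i\in\{1,\dots,l-1\}$ at which $q$ is to be inserted (the boundary cases $i=0$ and $i=l$ reduce to the plain tensor products $q\otimes p$ and $p\otimes q$, covered directly by Theorem~\ref{mainthm}(b)).

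First I would form
$$T := e_1 \otimes \cdots \otimes e_i \otimes q \otimes e_{i+1} \otimes \cdots \otimes e_l \in P^{\twocol}(l, l+m).$$
Each partition $e_j$ lies in $H_n(X)$ because the relations $R(\idpartww)$ and $R(\idpartbb)$ are empty, and $q\in H_n(X)$ by hypothesis; hence the tensor product clause of Theorem~\ref{mainthm} yields $T\in H_n(X)$. By construction, the $l$ upper points of $T$ carry the colors $c_1,\dots,c_l$, which agree exactly with the lower colors of $p$.

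Next I would form the composition $Tp$ (with $p$ drawn on top and $T$ below). The color matching just noted makes this composition admissible in the two-colored setting, so the composition clause of Theorem~\ref{mainthm} gives $Tp\in H_n(X)$. A picture-level inspection then confirms that $Tp\in P^{\twocol}(0,l+m)$ has exactly the desired block structure: the identity strings of $T$ transmit the blocks of $p$'s first $i$ and last $l-i$ lower points onto the outer $i$ and $l-i$ lower points of the result, while the middle $m$ lower points retain the block structure of $q$. Thus $Tp$ is precisely the partition obtained by inserting $q$ between the $i$-th and $(i+1)$-th leg of $p$.

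The only real subtlety is the color bookkeeping at the interface between $p$ and $T$; the choice of colors for the $e_j$ resolves it, and the entire argument uses nothing beyond parts~(a)--(c) of Theorem~\ref{mainthm}. In particular, neither the involution nor the verticolor reflection nor the hypothesis that pair partitions belong to $H_n(X)$ is required, which matches the comment in the lemma that closure under tensor products and compositions plus the presence of $\idpart$ suffices.
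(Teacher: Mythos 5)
Your proof is correct and is essentially the paper's own argument: the paper likewise writes the inserted partition as $(r_1\otimes q\otimes r_2)p$ with $r_1,r_2$ suitable tensor products of the colored identity partitions $\idpartww$ and $\idpartbb$, and appeals to closure under tensor products and composition. Your color bookkeeping for the $e_j$ and the observation that only parts (a)--(c) of the (colored) Theorem \ref{mainthm} are needed match the paper's remark that the lemma holds for any set closed under these operations containing the identity partitions.
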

\begin{proof}
The composition $(r_1\otimes q\otimes r_2)p$ is in $H_n(X)$ for suitable tensor products $r_1$ and $r_2$ of the identity partitions $\idpartww$ and $\idpartbb$.
\end{proof}

As an example for the above lemma:

\setlength{\unitlength}{0.5cm}
\begin{center}
\begin{picture}(28,2.5)
 \put(0,0){\uppartiv{2}{1}{3}{4}{5}}
 \put(0,0){\upparti{1}{2}}
 \put(0,0){\uppartiii{1}{7}{8}{9}}
 \put(6,0){,}
 \put(10,0){$\in H_n(X)$}
 \put(1,-0.3){$\circ$}
 \put(2,-0.3){$\bullet$}
 \put(3,-0.3){$\circ$}
 \put(4,-0.3){$\bullet$}
 \put(5,-0.3){$\bullet$}
 \put(7,-0.3){$\circ$}
 \put(8,-0.3){$\bullet$}
 \put(9,-0.3){$\bullet$}
\end{picture}
\end{center}
\begin{center}
\begin{picture}(28,5)
 \put(1,0){$\Longrightarrow$}
 \put(3,0){\uppartiv{2}{1}{6}{7}{8}}
 \put(3,0){\upparti{1}{2}}
 \put(3,0){\uppartiii{1}{3}{4}{5}}
 \put(4,-0.3){$\circ$}
 \put(5,-0.3){$\bullet$}
 \put(6,-0.3){$\circ$}
 \put(7,-0.3){$\bullet$}
 \put(8,-0.3){$\bullet$}
 \put(9,-0.3){$\circ$}
 \put(10,-0.3){$\bullet$}
 \put(11,-0.3){$\bullet$}
 \put(12,0){$=$}
 \put(12,0){\upparti{2}{1}}
 \put(12,0){\upparti{2}{2}}
 \put(12,0){\uppartiii{1}{4}{5}{6}}
 \put(12,0){\upparti{2}{8}}
 \put(12,0){\upparti{2}{9}}
 \put(12,0){\upparti{2}{10}}
 \put(12,2.5){\uppartiv{2}{1}{8}{9}{10}}
 \put(12,2.5){\upparti{1}{2}}
 \put(15,0){$\otimes$}
 \put(19,0){$\otimes$}
 \put(13,-0.3){$\circ$}
 \put(14,-0.3){$\bullet$}
 \put(16,-0.3){$\circ$}
 \put(17,-0.3){$\bullet$}
 \put(18,-0.3){$\bullet$}
 \put(20,-0.3){$\circ$}
 \put(21,-0.3){$\bullet$}
 \put(22,-0.3){$\bullet$}
 \put(13,2){$\circ$}
 \put(14,2){$\bullet$}
 \put(20,2){$\circ$}
 \put(21,2){$\bullet$}
 \put(22,2){$\bullet$}
 \put(23,0){$\in H_n(X)$}
\end{picture}
\end{center}

\subsection{Operations requiring auxiliary partitions}

If $H_n(X)$ contains certain key partitions, we have further operations inside $H_n(X)$. The following is a slight adaption of \cite[Lem. 1.3]{TWcomb}.

\begin{lem}\label{LemAux}
Let $X\subset P^{\twocol}$ be $n$-admissible and let $p\in P^{\twocol}(k,l)$.
\begin{itemize}
\item[(a)] If $\idpartwb\otimes\idpartbw,\idpartbw\otimes\idpartwb\in H_n(X)$, then $H_n(X)$ is closed under permutation of colors on the same line, i.e. if $p\in H_n(X)$, then $p'\in H_n(X)$, where $p'$ is obtained from $p$ by permutation of the colors of the points on the upper line (without changing the strings connecting the points) resp. on the lower line.
\item[(b)] If $\idpartsingletonww,\idpartsingletonbb\in H_n(X)$, then $H_n(X)$ is closed under  disconnecting any point from a block and turning it into a singleton.
\item[(c)] If $\vierpartrotwwww,\vierpartrotbbbb,\vierpartrotwbwb,\vierpartrotbwbw\in H_n(X)$, then $H_n(X)$ is closed under  connecting neighbouring blocks on the same line.
\end{itemize}
\end{lem}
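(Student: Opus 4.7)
My plan for all three parts follows the same template: build an auxiliary partition $q$ as a tensor product of the hypothesised ``key'' partitions together with copies of $\idpartww$ and $\idpartbb$, and realise the desired transformation of $p$ as a composition $qp$ (to act on the upper line) or $pq'$ (symmetric construction on the lower line). Theorem~\ref{mainthm}, in its two-coloured analogue cited in Section~\ref{SectTwoCol}, gives $\idpartww,\idpartbb \in H_n(X)$ and closure of $H_n(X)$ under tensor products and compositions, so the resulting $qp$ automatically lies in $H_n(X)$; the remaining work is to check that it actually coincides with the intended transformation of $p$.

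For (a), to swap the colours at adjacent upper positions $i, i+1$ with $(c_i,c_{i+1}) = (\circ,\bullet)$, take $q$ whose $j$-th tensor factor for $j \notin \{i,i+1\}$ is $\idpartww$ or $\idpartbb$ according to $c_j$, and whose $i$-th and $(i+1)$-st factors together form $\idpartbw \otimes \idpartwb$; the case $(\bullet,\circ)$ uses $\idpartwb \otimes \idpartbw$, and monochromatic pairs need no swap at all. Each factor of $q$ is still an identity on its own pair of points, so $qp$ preserves every block of $p$ as a set of positions and only reassigns the colours at upper positions $i$ and $i+1$. Since any permutation of a two-coloured sequence preserving its multiset is a product of adjacent transpositions of unequal colours, iteration yields the full claim for the upper line; the lower line is handled symmetrically.

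For (b), to disconnect the upper point of $p$ at position $i$ from its block $B$ and replace it by a singleton, insert a single $\idpartsingletonww$ or $\idpartsingletonbb$ (matching $c_i$) in the $i$-th slot of an otherwise identity tensor product $q$, and form $qp$. The middle point at position $i$ is shared between $B$ and a singleton block of $q$, so after removing middle points $B$ simply loses that one element, while a fresh singleton appears at upper position $i$ from the top half of the inserted partition. All other strings of $p$ pass through the identity slots of $q$ unchanged. Lower-line points are analogous, via composing from below with a $q'$ of the same shape.

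For (c), merging two neighbouring upper blocks $B_1,B_2$ that meet at positions $i$ and $i+1$ with colours $(c_i,c_{i+1})$ uses the one of $\vierpartrotwwww,\vierpartrotbbbb,\vierpartrotwbwb,\vierpartrotbwbw$ whose lower (and simultaneously upper) colours read $(c_i,c_{i+1})$, inserted in the $i$-th and $(i+1)$-st slots of an otherwise identity tensor product $q$. In $qp$, the unique four-element block of $\vierpartrot$ meets both $B_1$ and $B_2$ through the two shared middle points, forcing a three-way merge; after the middle points are discarded, the two upper points of $\vierpartrot$ together with the remaining points of $B_1 \cup B_2$ form a single block. The four colour variants are required precisely because one cannot alter the colours at $i,i+1$ before the merge without invoking (a). The main obstacle across the three parts is the careful tracking of which $p$-blocks and $q$-blocks merge through which middle points under composition --- particularly in (c), where the $\vierpartrot$ block must serve as the bridge joining $B_1$ and $B_2$ rather than leaving the two blocks separately joined to a trivial bridge --- but once the stacking of $q$ above $p$ is drawn diagrammatically, each case reduces to direct inspection.
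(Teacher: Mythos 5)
Your proposal is correct and takes essentially the same route as the paper, whose proof consists precisely of these compositions of $p$ with tensor products of identity partitions and the hypothesised key partitions (there only illustrated by pictures, as an adaptation of \cite[Lem.~1.3]{TWcomb}), using the two-coloured analogue of Theorem \ref{mainthm} for closure under tensor product and composition. One purely notational caveat: in the paper's convention $qp$ means $q$ written \emph{below} $p$, so the composition acting on the upper line is $pq$ rather than $qp$; your diagrams and block-tracking are otherwise exactly the paper's argument.
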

\begin{proof}
The proof is an easy adaption of the proof of \cite[Lem. 1.3]{TWcomb}. We only illustrate it by pictures.

\setlength{\unitlength}{0.5cm}
\newsavebox{\boxppermuted}
   \savebox{\boxppermuted}
   { \begin{picture}(3,3.5)
     \put(-1,4.35){\partii{1}{1}{2}}
     \put(-1,4.35){\partii{1}{3}{4}}
     \put(-1,0.35){\uppartii{1}{1}{2}}
     \put(-1,0.35){\upparti{2}{3}}
     \put(0.05,0){$\circ$}
     \put(1.05,0){$\bullet$}
     \put(2.05,0){$\circ$}
     \put(0.05,3.3){$\bullet$}
     \put(1.05,3.3){$\circ$}
     \put(2.05,3.3){$\bullet$}
     \put(3.05,3.3){$\circ$}
     \end{picture}}
\newsavebox{\boxpdisconnected}
   \savebox{\boxpdisconnected}
   { \begin{picture}(3,3.5)
     \put(-1,4.35){\partii{1}{1}{2}}
     \put(-1,4.35){\partii{1}{3}{4}}
     \put(-1,0.35){\uppartii{1}{1}{2}}
     \put(-1,0.35){\upparti{1}{3}}
     \put(0.05,0){$\circ$}
     \put(1.05,0){$\circ$}
     \put(2.05,0){$\bullet$}
     \put(0.05,3.3){$\bullet$}
     \put(1.05,3.3){$\circ$}
     \put(2.05,3.3){$\bullet$}
     \put(3.05,3.3){$\circ$}
     \end{picture}}
\newsavebox{\boxpconnected}
   \savebox{\boxpconnected}
   { \begin{picture}(3,3.5)
     \put(-1,4.35){\partii{1}{1}{2}}
     \put(-1,4.35){\partii{1}{3}{4}}
     \put(-1,0.35){\uppartiii{1}{1}{2}{3}}
     \put(-1,0.35){\upparti{2}{3}}
     \put(0.05,0){$\circ$}
     \put(1.05,0){$\circ$}
     \put(2.05,0){$\bullet$}
     \put(0.05,3.3){$\bullet$}
     \put(1.05,3.3){$\circ$}
     \put(2.05,3.3){$\bullet$}
     \put(3.05,3.3){$\circ$}
     \end{picture}}
\begin{center}
\begin{picture}(28,8)
 \put(2,3){\usebox{\boxpexample}}
 \put(1.3,4){\parti{3}{1}}
 \put(1.3,4){\parti{3}{2}}
 \put(1.3,4){\parti{3}{3}}
 \put(2.3,-0.35){$\circ$}
 \put(3.3,-0.35){$\bullet$}
 \put(4.3,-0.35){$\circ$}
 \put(2.7,1.5){$\otimes$}
 \put(3.7,1.5){$\otimes$}
 \put(6.5,3){$=$}
 \put(8,2.5){\usebox{\boxppermuted}}
 \put(7.5,1){permutation}
 \put(7.5,0){of colors}
 \put(0.5,3){(a)}
 \put(17,3){\usebox{\boxpexample}}
 \put(16.3,4){\parti{3}{1}}
 \put(16.3,4){\parti{3}{2}}
 \put(16.3,4){\parti{1}{3}}
 \put(16.3,2){\parti{1}{3}}
 \put(17.3,-0.35){$\circ$}
 \put(18.3,-0.35){$\circ$}
 \put(19.3,-0.35){$\bullet$}
 \put(17.7,1.5){$\otimes$}
 \put(18.7,1.5){$\otimes$}
 \put(21.5,3){$=$}
 \put(23,2.5){\usebox{\boxpdisconnected}}
 \put(22,1){disconnecting}
 \put(22,0){a point}
 \put(15.5,3){(b)}
\end{picture}
\end{center}
\begin{center}
\begin{picture}(28,8)
 \put(2,3){\usebox{\boxpexample}}
 \put(1.3,4){\parti{3}{1}}
 \put(1.3,4){\partii{1}{2}{3}}
 \put(1.3,0){\uppartii{1}{2}{3}}
 \put(1.8,3){\parti{1}{2}}
 \put(2.3,-0.35){$\circ$}
 \put(3.3,-0.35){$\circ$}
 \put(4.3,-0.35){$\bullet$}
 \put(2.7,1.5){$\otimes$}
 \put(6.5,3){$=$}
 \put(8,2.5){\usebox{\boxpconnected}}
 \put(8,1){connecting}
 \put(8,0){blocks}
 \put(0.5,3){(c)}
\end{picture}
\end{center}
\end{proof}

\subsection{Operations requiring the pair partitions}
\label{SectAppMoreOper}

If the partitions $\paarpartwb,\paarpartbw,\baarpartwb,\baarpartbw$ are in $H_n(X)$ (for instance, if our partition $C^*$-algebra underlies a Banica-Speicher quantum group), then $H_n(X)$ is a category of partitions by Corollary \ref{CorBSQG} and we have a particularly nice partition calculus. For instance, we have a \emph{rotation} of partitions: Let $p\in P^{\twocol}(k,l)$ be a partition connecting $k$ upper points with $l$ lower points. Shifting the very left upper point to the left of the lower points  and inverting its color gives rise to a partition in $P^{\twocol}(k-1, l+1)$, a \emph{rotated version} of $p$. Note that the point still belongs to the same block after rotation. We may also rotate the leftmost lower point to the very left of the upper line (again inverting its color), and we may as well rotate in the right hand side of the lines.  In particular, for a partition $p\in P^{\twocol}(0,l)$, we may rotate the very left point to the very right and vice versa. Such a rotation on one line does \emph{not} change the colors of the points. With the example of $p$ as in  Section \ref{SectAppBasicOper}, we obtain the following rotated versions of $p$:

\begin{center}
\begin{picture}(25,4)
 \put(1,0){\usebox{\boxproteins}}
 \put(6.5,1.5){$\in P^{\twocol}(3,4)$}
 \put(15.5,0){\usebox{\boxprotzwei}}
 \put(22,1.5){$\in P^{\twocol}(2,5)$}
\end{picture}
\end{center}
\begin{center}
\begin{picture}(25,4)
 \put(1,0){\usebox{\boxprotdrei}}
 \put(8.5,1){$\in P^{\twocol}(0,7)$}
 \put(15.5,0){\usebox{\boxprotvier}}
 \put(23,1){$\in P^{\twocol}(0,7)$}
\end{picture}
\end{center}

The following lemma is a slight adaption of \cite[Lem. 1.1]{TWcomb}.

\begin{lem} \label{PropOper1}
Let $X\subset P^{\twocol}$ be an $n$-admissible set and let $\paarpartwb,\paarpartbw,\baarpartwb,\baarpartbw\in H_n(X)$.
\begin{itemize}
\item[(a)]  $H_n(X)$ is closed under rotation.
\item[(b)] $H_n(X)$ is closed under erasing neighbouring points of inverse colors, i.e. if $p\in P^{\twocol}(k,l)$ is a partition in $H_n(X)$, then the partition $p'\in P^{\twocol}(k,l-2)$ is in $H_n(X)$ which is obtained from $p$ by first connecting the blocks to which the $j$-th and the $(j+1)$-th lower points (of inverse colors) belong respectively, and then erasing these two points. We may also erase neighbouring points of inverse colors on the upper line.
\end{itemize}
\end{lem}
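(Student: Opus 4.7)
The plan is to realise the target partition $p'$ in each part as an explicit tensor-composition expression in $p$, the monochromatic identity partitions $\idpartww$ and $\idpartbb$ (which lie in $H_n(X)$ by the two-coloured analogue of Theorem~\ref{mainthm}(a)), and the four pair partitions $\paarpartwb,\paarpartbw,\baarpartwb,\baarpartbw$ available by hypothesis. Once such a formula is written down, $p'\in H_n(X)$ is immediate from closure of $H_n(X)$ under tensor products and compositions, i.e.\ Theorem~\ref{mainthm}(b,c). No new combinatorial idea beyond Theorem~\ref{mainthm} is required; the entire content lies in choosing the right formula and checking that its picture coincides with the operation announced in the statement.

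For part~(a), I will treat in detail one representative case, namely rotation of the leftmost upper point downwards, and invoke left--right reflection and the interchange of $\paarpart$ with $\baarpart$ to cover the three remaining rotation directions. Assume the leftmost upper point $u_1$ of $p\in P^{\twocol}(k,l)$ is white; I set
\[p' \;=\; (\idpartbb\otimes p)\cdot(\paarpartbw\otimes J),\]
where $J$ is the tensor product of $k-1$ copies of $\idpartww$ or $\idpartbb$ chosen to match the colours of $u_2,\dots,u_k$. Dimensionally the top factor is $k-1\to k+1$ and the bottom factor is $k+1\to l+1$. The two-coloured composition is legal: the black left leg of $\paarpartbw$ pairs with the black upper of $\idpartbb$, the white right leg pairs with $u_1$, and the remaining middle points pair off through $J$. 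Tracing blocks, the string meeting $u_1$ in $p$ is rerouted through $\paarpartbw$ and $\idpartbb$ to the new leftmost lower point of $p'$, which is now black, while every other block survives unchanged and the upper points of $p'$ are exactly $u_2,\dots,u_k$. If $u_1$ is black instead, one uses $\paarpartwb$ and $\idpartww$.

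For part~(b), assume the $j$-th and $(j+1)$-th lower points of $p$ are coloured $\circ$ and $\bullet$ in that order; I set
\[p' \;=\; (J_1\otimes\baarpartwb\otimes J_2)\cdot p,\]
where $J_1$ and $J_2$ are tensor products of $\idpartww$ and $\idpartbb$ matching the remaining lower points of $p$ to the left and right of positions $j,j+1$. The bottom factor then lives in $P^{\twocol}(l,l-2)$, colour compatibility holds by construction, and in the composed partition the two blocks of $p$ containing $d_j$ and $d_{j+1}$ are first merged through the cap $\baarpartwb$ and then the two middle points vanish, exactly as described. The opposite colour order uses $\baarpartbw$, and erasure on the upper line is obtained dually by composing a cup $\paarpart_c$ above $p$, flanked by suitable monochromatic identities.

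The main obstacle, such as it is, is purely colour bookkeeping: in each variant one must verify that the chosen pair partition and flanking identities render every composition two-coloured and that the resulting picture is the partition demanded by the statement. This is a routine combinatorial check once the formulas above are in place, and requires no argument beyond Theorem~\ref{mainthm}(a)--(c).
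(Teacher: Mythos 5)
Your proposal is correct and takes essentially the same route as the paper: the paper's proof (a reference to \cite[Lem.~1.1]{TWcomb} plus pictures) realises the rotation as the composition $(\idpartww\otimes p)(\paarpartwb\otimes r)$ with $r$ a tensor product of identities, and the erasure as composing an identity-flanked cup $\baarpartwb$ (resp.\ $\baarpartbw$) below $p$, which are exactly your formulas up to the choice of representative colours, combined with the two-coloured version of Theorem~\ref{mainthm}(a)--(c). The only small caveat is that the ``left--right reflection'' you invoke for the remaining rotation directions should be understood as the verticolor reflection (applied twice around the established case) or, more simply, as writing down the mirrored formulas with $\baarpartwb,\baarpartbw$ directly; this does not affect the argument.
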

\begin{proof}
The proof may be found in \cite[Lem. 1.1]{TWcomb}. We only illustrate it here by pictures:

(a) 
\setlength{\unitlength}{0.5cm}
\begin{center}
\begin{picture}(28,6)
 \put(0,1.5){$p=$}
 \put(1.5,0){\usebox{\boxpexample}}
 \put(8,1.5){$\left(\idpartww\otimes p\right)\left(\paarpartwb\otimes r\right)=$}
 \put(15.9,0.3){\upparti{3}{1}}
 \put(17,0){$\circ$}
 \put(17,3.3){$\circ$}
 \put(17.8,1.5){$\otimes$}
 \put(18.6,0){\usebox{\boxpexample}}
 \put(15.9,3.7){\uppartii{1}{1}{3}}
 \put(19.3,4.2){$\otimes$}
 \put(18.9,3.7){\upparti{2}{1}}
 \put(18.9,3.7){\upparti{2}{2}}
 \put(18.9,3.7){\upparti{2}{3}}
 \put(19.9,5.7){$\circ$}
 \put(20.9,5.7){$\bullet$}
 \put(21.9,5.7){$\circ$}
 \put(23,1.5){$=$}
 \put(24,0){\usebox{\boxproteins}}
\end{picture}
\end{center}

(b) 
\setlength{\unitlength}{0.5cm}
\newsavebox{\boxpcap}
   \savebox{\boxpcap}
   { \begin{picture}(3,3.5)
     \put(-1,4.35){\partii{1}{1}{2}}
     \put(-1,4.35){\partii{1}{3}{4}}
     \put(-1,4.35){\parti{2}{3}}
     \put(-1,0.35){\upparti{1}{1}}
     \put(0.3,1.3){\line(1,0){2}}
     \put(0.05,0){$\circ$}
     \put(0.05,3.3){$\bullet$}
     \put(1.05,3.3){$\circ$}
     \put(2.05,3.3){$\bullet$}
     \put(3.05,3.3){$\circ$}
     \end{picture}}
\begin{center}
\begin{picture}(28,6)
 \put(0,1.5){$p=$}
 \put(1.5,0){\usebox{\boxpexample}}
 \put(8,1.5){$p'=$}
 \put(10,2){\usebox{\boxpexample}}
 \put(9.25,3.05){\parti{2}{1}}
 \put(10.7,1){$\otimes$}
 \put(9.25,3.05){\partii{1}{2}{3}}
 \put(10.3,-0.3){$\circ$}
 \put(14,1.5){$=$}
 \put(15,0){\usebox{\boxpcap}}
\end{picture}
\end{center}
\end{proof}

The following lemma is an adaption of \cite[Lem. 2.1]{TWcomb}.

\begin{lem}\label{LemCases}
Let $X\subset P^{\twocol}$ be $n$-admissible and let $\paarpartwb,\paarpartbw,\baarpartwb,\baarpartbw\in H_n(X)$.
\begin{itemize}
\item[(a)] $\idpartsingletonww\notin H_n(X)$ if and only if all blocks of  partitions $p\in H_n(X)$ have length at least two.
\item[(b)] $\vierpartrotwbwb\notin H_n(X)$ if and only if all blocks of partitions $p\in H_n(X)$ have length at most two.
\end{itemize}
\end{lem}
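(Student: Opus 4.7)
In both parts, one direction is immediate: the partitions $\idpartsingletonww$ and $\vierpartrotwbwb$ visibly contain blocks of length one and four respectively, so if $H_n(X)$ only admits blocks of length $\ge 2$ (resp.\ $\le 2$) then these partitions cannot be in $H_n(X)$. The substantive direction in each case is the contrapositive: from a single witness $p\in H_n(X)$ with a singleton (resp.\ with a block of length $\ge 3$), produce $\idpartsingletonww$ (resp.\ $\vierpartrotwbwb$) in $H_n(X)$. The tools at our disposal are exactly those guaranteed by the hypothesis $\paarpartwb,\paarpartbw,\baarpartwb,\baarpartbw\in H_n(X)$: by Corollary~\ref{CorBSQG} the set $H_n(X)$ is a category of partitions, by Lemma~\ref{PropOper1} it is closed under rotation and under erasure of adjacent inverse-colored points, and by Lemma~\ref{LemAux}(a) (applicable since $\idpartwb\otimes\idpartbw\in H_n(X)$) it is closed under color permutation on a single line. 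The plan in both parts combines these tools with the mirror-tensor trick $p\otimes\tilde p$.

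For part (a), suppose $p\in H_n(X)$ contains a singleton. I would first use rotation to bring $p$ into $P^{\twocol}(0,l)$ and then cyclically shift so that the singleton sits at position~$1$, of some color $c$. The verticolor reflection $\tilde p$ then has its singleton at position~$l$ of color $\bar c$, and in $p\otimes\tilde p\in H_n(X)\cap P^{\twocol}(0,2l)$ the positions $i$ and $2l+1-i$ carry mutually inverse colors for $2\le i\le l$. I would then apply Lemma~\ref{PropOper1}(b) to cap the pairs $(l,l+1),(l-1,l+2),\ldots,(2,2l-1)$ in this order; at each stage the two positions being capped have become adjacent in the current partition and carry inverse colors, so the cap is legal. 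Every non-singleton block of $p$ (and of $\tilde p$) has all of its points inside the erased range $\{2,\ldots,l\}\cup\{l+1,\ldots,2l-1\}$, so such blocks are completely removed. The surviving partition consists of two singleton blocks of opposite colors at positions $1$ and $2l$; one further rotation of the rightmost point to the upper line produces $\idpartsingletonww$ or $\idpartsingletonbb$, and since $\idpartsingletonww$ and $\idpartsingletonbb$ are rotationally equivalent, we conclude $\idpartsingletonww\in H_n(X)$.

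For part (b), suppose $p\in H_n(X)$ contains a block $B$ of length $s\ge 3$. I would run the same mirror-tensor trick, stopping the middle-out capping earlier and correcting colors afterwards. After cyclically rotating $p$ so that position~$l$ lies outside $B$ (trivial if $p=b_l$), I would cap $(l,l+1),(l-1,l+2),\ldots$ iteratively: caps whose positions lie outside $B\cup\tilde B$ erase the other blocks just as in part~(a), while caps whose positions lie inside $B\cup\tilde B$ merge $B$ with its mirror $\tilde B$ and reduce the merged block by two points each. I would stop exactly when $B\cup\tilde B$ has reached combined size~$4$. The surviving partition is then a single block of length four on positions whose colors form a pattern of two $c_i$'s followed by two inverted $\bar c_j$'s; color permutation on the lower line converts this into the alternating pattern $(\circ,\bullet,\circ,\bullet)$, giving $\vierpartwbwb\in H_n(X)$, and rotating the two rightmost lower points to the upper line produces $\vierpartrotwbwb$. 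The main obstacle I expect is that, when the positions of $B$ are cyclically interleaved in $p$ with those of several non-$B$ blocks (as in the case $B=\{1,3,5\}$ with a crossing pair $\{2,4\}$), the naive middle-out schedule can be forced to cap inside $B$ before all non-$B$ positions have been erased, leaving residual non-$B$ content at the end; handling such cases in general will require a more flexible capping order together with color permutation and, where needed, auxiliary tensoring with non-crossing pair partitions of $H_n(X)$ to rearrange non-$B$ positions ahead of time. The argument remains entirely combinatorial, but verifying that some admissible schedule of these elementary operations always exists, regardless of the block structure of $p$, is the technical crux of the proof.
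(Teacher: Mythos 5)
Your part (a) is fine: the trivial direction is as you say, and in the substantive direction the mirror-tensor construction $p\otimes\tilde p$ followed by middle-out capping uses only operations that are legitimately available under the hypotheses (rotation and erasure of adjacent inverse-colored points from Lemma \ref{PropOper1}, verticolor reflection and tensor products since $H_n(X)$ is a category by Corollary \ref{CorBSQG}), and the bookkeeping you give — the two end singletons are never merged with anything, and the final rotation identifies $\idpartsingletonww$ with $\idpartsingletonbb$ — is correct. Note, for what it is worth, that the paper itself does not prove this lemma but cites \cite[Lem.\ 2.1]{TWcomb}, so there is no in-paper argument to compare against; your (a) is a valid self-contained substitute for that half.

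Part (b), however, is not a proof, and you say so yourself: the "technical crux" — that some admissible capping schedule always isolates a four-point remnant of the big block, no matter how the legs of $B$ are interleaved with other blocks — is exactly the content of the statement, and the middle-out schedule demonstrably fails already for $B=\{1,3,5\}$ with a crossing pair $\{2,4\}$, as you note. Beyond that acknowledged gap, the repair you sketch leans on tools that are not available under the lemma's hypotheses. Color permutation via Lemma \ref{LemAux}(a) requires $\idpartwb\otimes\idpartbw,\idpartbw\otimes\idpartwb\in H_n(X)$, and this does \emph{not} follow from $\paarpartwb,\paarpartbw,\baarpartwb,\baarpartbw\in H_n(X)$: rotating $\idpartwb\otimes\idpartbw$ onto one line produces one-colored pair blocks (of type $\paarpartww$, $\paarpartbb$), which do not lie in the category generated by the four mixed-color pair partitions (the category of $U_n^+$, where the corresponding relation $u_{ij}u_{kl}^*=u_{ij}^*u_{kl}$ fails). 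So you cannot freely recolor, and the color bookkeeping is itself substantive: your mirror construction, even when the schedule works, naturally yields the pattern $c_1c_2\bar c_2\bar c_1$ (e.g.\ the block $\vierpartwwbb$), and passing from that to the alternating block $\vierpartwbwb$ needed for $\vierpartrotwbwb$ requires an argument, not a recoloring step — one-line rotations never change colors, so no sequence of the rotations you invoke turns $\circ\circ\bullet\bullet$ into $\circ\bullet\circ\bullet$. As it stands, your proposal establishes (a) and the trivial directions of (b), but the hard implication of (b) remains open.
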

\begin{proof}
See \cite[Lem. 2.1]{TWcomb}.
\end{proof}

\bibliographystyle{alpha}

\bibliography{PartitionBib}

\end{document}